\documentclass[10pt,a4paper]{amsart}
\usepackage{amsmath,amssymb,amsthm}
\usepackage{mathtools}
\usepackage{hyperref}
\usepackage{geometry}
\usepackage{enumitem}
\usepackage{bm}

\newtheorem{theorem}{Theorem}[section]
\newtheorem{proposition}[theorem]{Proposition}
\newtheorem{lemma}[theorem]{Lemma}
\newtheorem{corollary}[theorem]{Corollary}

\theoremstyle{definition}
\newtheorem{definition}[theorem]{Definition}

\newtheorem{remark}[theorem]{Remark}
\newtheorem{assumption}[theorem]{Assumption}

\newcommand{\C}{\mathbb{C}}

\newcommand{\Z}{\mathbb{Z}}
\newcommand{\D}{\mathbb{D}}

\newcommand{\Vx}[1]{V_{\chi_{#1}}}
\newcommand{\VxS}[1]{V^{S_{#1}}_{\chi_{#1}}}
\newcommand{\VxX}[1]{V^{X_{#1}}_{\chi_{#1}}}
\newcommand{\Del}[1]{\Delta_{#1}}

\newcommand{\piX}[2]{\pi_1(#1,#2)}

\newcommand{\rk}{\mathrm{rank}}
\newcommand{\id}{\mathrm{id}}
\DeclareMathOperator{\Hom}{Hom}

\newcommand{\Bez}{\mathcal{B}}
\newcommand{\Ves}{\mathfrak{V}}

\newcommand{\hp}{\widetilde{p}}
\newcommand{\hX}{\widetilde{X}}

\begin{document}
\title[Hardy spaces on Riemann surfaces under ramified coverings]
      {Hardy spaces on Riemann surfaces under ramified coverings}
\author{Alexander Zuevsky}
\address{Institute of Mathematics, Czech Academy of Sciences,
         \v{Z}itn\'{a} 25, 115\;67 Prague 1, Czech Republic}
\email{zuevsky@yahoo.com}

\subjclass[2010]{
  Primary: 30H10, 47A48;
  Secondary: 14H30, 47B32, 47A45}

\keywords{Hardy spaces, Riemann surfaces, ramified coverings,
  indefinite inner products, Kre\u{\i}n spaces, spin structures,
  vessels, Bezoutians,
  commuting non-selfadjoint operators, half-order differentials}

\begin{abstract}
We extend the theory of indefinite Hardy spaces on finite bordered Riemann
surfaces to the setting of ramified analytic coverings.
Given a finite $n$-sheeted ramified covering $F\colon S_1\to S_2$ of
finite bordered Riemann surfaces satisfying a spin-compatibility hypothesis,
we construct
(i) the direct image of a unitary flat vector bundle $\VxX{1}\otimes\Del{1}$
on the double $X_1$ under $F$, taking full account of the 
ramification divisor $R_F$ and establishing the extension across the 
branch locus via a local analysis; 
(ii) a canonical matrix function $G_2$ encoding the parahermitian 
structure on $X_2$, together with the induced representation 
$\chi_2$ of $\piX{X_2}{p_0}$; 
(iii) an explicit isometric isomorphism
$\phi_F\colon H^{2,J_1(p)}(S_1,\VxS{1}\otimes\Del{1})
\xrightarrow{\;\sim\;} H^{2,J_2(p)}(S_2,\VxS{2}\otimes\Del{2})$ 
between the associated Hardy-Kre\u{\i}n spaces, provided 
that $h^0(X_1,\VxX{1}\otimes\Del{1})=0$ and that the branch locus
is disjoint from $\partial S_2$. 
We then develop the resulting operator theory in terms of 
vessels and Bezoutian operators.
To each object in the category $\mathcal{RH}$ of finite bordered surfaces
with unitary flat bundles we attach a triangular vessel whose input and
output spaces are the Hardy-Kre\u{\i}n spaces on the two surfaces.  
 The Bezoutian of the vessel is expressed as a finite-rank operator on $\mathcal{H}_2$
whose kernel is built from bounded holomorphic point-evaluation functionals in $\mathcal{H}_2$ 
evaluated at the interior ramification images $F(r_\nu)\in S_2$, 
consistently with the boundary-transversality hypothesis $\partial S_2\cap B_F=\left\{\varnothing\right\}$.
We prove that the assignment
$(S,\Vx{},J)\mapsto H^{2,J(p)}(S,\Vx{}\otimes\Delta)$ extends to a covariant
functor from $\mathcal{RH}$ (with ramified morphisms) to the category of
Kre\u{\i}n spaces.
\end{abstract}
\maketitle

\section{Introduction}
\label{secintro}
Hardy spaces on finite bordered Riemann surfaces with indefinite inner
products were introduced and systematically studied in
\cite{AlpayVinnikov2000}.
The main construction there replaces the usual
unit-disk model by the double $X$ of a bordered surface $S$, uses Cauchy
kernels for vector bundles on compact Riemann surfaces, and builds an
explicit isometric isomorphism to a Hardy-Kre\u{\i}n space over $\D$.
In \cite{Zuevsky2009, Zuevsky2015} it was shown that an  {unramified} covering
$F\colon S_1\to S_2$ induces a canonical isometric isomorphism between the
associated Hardy-Kre\u{\i}n spaces.
The present paper addresses the remaining and more subtle case of ramified coverings.
Ramification introduces zeroes of the derivative of $F$, equivalently, poles of the
reciprocal Jacobian, which invalidate the
change-of-variables argument used in the unramified case and necessitate
a correction at the ramification locus.
The central new difficulty compared to the unramified case is that the
half-order differential $\Delta_1$ satisfying $\Delta_1^{\otimes 2}\cong K_{X_1}$
and the compatibility $\Delta_1\cong F^*\Delta_2\otimes\mathcal{O}_{X_1}(\tfrac{1}{2}R_F)$
requires the existence of a  {spin structure} on $X_1$ compatible with
$F$ and $\Delta_2$. This is a cohomological obstruction
(see Section \ref{secdirectimage} and Assumption \ref{assspin} below).
We make this hypothesis explicit throughout and show that under it all
constructions are globally well-defined.
A second hypothesis (Assumption \ref{assboundary})
ensures that the branch locus is disjoint from $\partial S_2$. Without
it several formulas require modification.

\textbf{Overview of results.}  
Let $F\colon S_1\to S_2$ be a finite $n$-sheeted analytic map that is ramified. 
 We allow $F$ to extend to a ramified covering
$F\colon X_1\to X_2$ of the doubles equivariant with respect to the
anti-holomorphic involutions.
The ramification divisor on $X_1$ is $R_F=\sum_{p\in X_1}(e_p-1)p$    
where $e_p\geq 1$ denotes the ramification index of $F$ at $p$.

\textit{Spin structure and half-order differentials (Section \ref{secdirectimage}).}
The main geometric hypothesis (Assumption \ref{assspin}) is the existence
of an $F$-compatible spin structure: a line bundle $\mathcal{L}$ on $X_1$
with $\mathcal{L}^{\otimes 2}\cong K_{X_1}$ and $\mathcal{L}\cong F^*\Delta_2\otimes
\mathcal{O}_{X_1}(\tfrac{1}{2}R_F)$, together with a $\tau_1$-equivariant
trivialisation of the sign ambiguity.
We verify this hypothesis explicitly
in the examples and show it is equivalent to the existence of a theta
characteristic on $X_1$ compatible with the covering data.

\textit{Bundle-theoretic results (Section \ref{secdirectimage}).}
Under Assumption \ref{assspin}, we construct the  {ramified direct
image} $F_*^{\rm ram}(\VxX{1}\otimes\Del{1})$ on $X_2$, prove that it
extends holomorphically across the branch locus $B_F$ as a rank-$nm$ bundle,
and verify that it is isomorphic to $\VxX{2}\otimes\Del{2}$ as a coherent
sheaf on all of $X_2$.

\textit{Boundary hypothesis (Section \ref{secpairing}).}
We impose Assumption \ref{assboundary}: $\partial S_2\cap B_F=\left\{\varnothing\right\}$,
i.e., the branch values of $F$ lie in the interior $S_2$.
This ensures
that the inner product formula \eqref{eqip2} is non-singular on $\partial S_2$.

\textit{Main theorem (Section \ref{secmainthm}).}
Under Assumptions \ref{assspin} and \ref{assboundary}, and assuming
$h^0(X_1,\VxX{1}\otimes\Del{1})=0$, we prove that the map
$\phi_F\colon \hat{f}^1\longmapsto\hat{f}^2$ 
defined by the ramified analogue of formula (13) of \cite{Zuevsky2009}, 
incorporating the local Jacobian factors $|\varphi_i'|$ with their
ramification orders and the $\sqrt{e_i}$-normalisation, is an isometric
isomorphism
$\phi_F\colon H^{2,J_1(p)}\;\bigl(S_1,\VxS{1}\otimes\Del{1}\bigr)
 \xrightarrow{\sim}H^{2,J_2(p)}\;\bigl(S_2,\VxS{2}\otimes\Del{2}\bigr)$.

\textit{Vessel and Bezoutian theory (Sections \ref{secvessels} and \ref{secbezoutian}).} 
We associate to the data $(F;S_1,S_2;\chi_1,J_1)$ a triangular vessel 
$\Ves_F$ in the sense of Kravitsky-Vinnikov \cite{Kravitsky1996,Vinnikov1993}.
We observe (Remark \ref{remcommutator-zero}) that the ordinary commutator
$A_2\phi_F-\phi_F A_1$ vanishes identically.
We clarify a subtlety in reading the linkage condition when
$\mathcal{H}_1\ne\mathcal{H}_2$ (Remark \ref{remtype-fix}): the two cross
terms must each be read with the isometry $\phi_F$ inserted
 symmetrically, not in just one term, which would not repair the
type mismatch, and the resulting identity understood as a pairing of
semilinear forms, exactly as in the classical single-space sources.
Once read this way, the vanishing of $A_2\phi_F-\phi_F A_1$ forces
$\Gamma^+=\Gamma^-=0$ as the  {unique} choice compatible with the
linkage condition (Proposition \ref{propgamma-zero}): all ramification
information is already carried by the explicit isometry $\phi_F$ itself,
and no further correction operator is required at the level of the model
operators $A_1$, $A_2$. 
The Bezoutian operator $\Bez_F^{\rm ram}\colon\mathcal{H}_2\to\mathcal{H}_2$
is built independently from rank-one operators
$T_{q_\nu}=\mathrm{ev}_{q_\nu}^{[*]}\Phi\;\mathrm{ev}_{q_\nu}$ at the
branch values $q_\nu=F(r_\nu)$ of the interior ramification points
(Definition \ref{defbezoutian-full}), using a non-isometric
deformation of $\phi_F$. This is where the substantive ramification
correction of the operator theory resides.
The adjoint $\mathrm{ev}_{q_\nu}^{[*]}$ is realised by the Cauchy/reproducing
kernel $K^{(2)}(\cdot,q_\nu)\in\mathcal{H}_2$ in the usual reproducing-kernel
sense.
The resulting operator is finite rank, supported on the interior
ramification locus, and satisfies a Livsic-Bezoutian identity.

\textit{Functoriality (Section \ref{secfunctor}).}
We prove that the Kre\u{\i}n space valued assignment extends to a covariant
functor on the full category $\mathcal{RH}$, including ramified morphisms, 
provided one replaces the naive direct image by $F_*^{\rm ram}$.

\textbf{Relation to previous work.} 
This paper is a naturural continuation of  \cite{Zuevsky2009, AlpayVinnikov2000}.
 When $S_2=\D$ and $F$ is the given holomorphic
map from $S_1$ to the disk, usually ramified, one recovers the main
construction of \cite{AlpayVinnikov2000} as a special case of
Theorem \ref{thmmain}.  The connection to operator vessels was anticipated
in \cite{Zuevsky2009} (Remark after Theorem 3.1) and is here made fully
explicit.

The paper is organized as follows. 
Section \ref{secprelim} recollects the necessary background from
\cite{AlpayVinnikov2000,Zuevsky2009}.  Section \ref{secdirectimage} carries
out the ramified direct image construction, stating and discussing the
spin-structure assumption.
Section \ref{secchi2} derives
the representation $\chi_2$ and the function $G_2$ in the ramified case.
Section \ref{secpairing} constructs the inner product on the target space
and states the boundary assumption.
Section \ref{secmainthm} states and proves the main isometric isomorphism
theorem.  Sections \ref{secvessels} and \ref{secbezoutian} develop the
vessel and Bezoutian theory.  Section \ref{secfunctor} discusses functoriality.
An appendix collects the relevant facts about fundamental groups of
bordered surfaces and their doubles.
Results of this paper are applicable in theory of foliations 
\cite{Zuevsky2022, zusurfaces}, 
and mathematical physics \cite{RSZ, volzub, zucont}. 
\section{Preliminaries}
\label{secprelim}
We follow the notation of \cite{AlpayVinnikov2000, Zuevsky2009} throughout.
\subsection{Finite bordered Riemann surfaces and their doubles}
Let $S$ be an open Riemann surface such that $S\cup\partial S$ is a finite
bordered Riemann surface with boundary $\partial S=X_0\cup\cdots\cup X_{k-1}$
consisting of $k\geq 1$ smooth Jordan curves.
The double $X$ of $S$ is the
compact Riemann surface of genus $g=2s+k-1$, where $s$ is the genus of $S$, 
obtained by gluing $S$ to its mirror image $S'$ along $\partial S$.
There is a
canonical anti-holomorphic involution $\tau\colon X\to X$ with fixed-point set
$X_f=\partial S$ and $X\setminus X_f=S\sqcup S'$.
\subsection{Flat vector bundles and signature matrices}
Fix a homomorphism $\chi\colon\piX{S}{p_0}\to U(m)$. The corresponding flat 
unitary vector bundle on $S$ is denoted $\Vx{}$.
An analytic section $f$ of
$\Vx{}$ on the universal cover $\hX$ satisfies
$f(T\hp)=\chi(T)f(\hp)$ for all deck transformations $T$.
A signature matrix for $\Vx{}$ is a locally constant $m\times m$
matrix function $J(\hp)$ on $\partial\hX$ satisfying $J(\hp)^*=J(\hp)$ and
$\chi(T)^*J(T\hp)\chi(T)=J(\hp)$.
The Hardy-Kre\u{\i}n space
$H^{2,J(p)}(S,\Vx{}\otimes\Delta)$ is the Hardy space $H^2(S,\Vx{}\otimes\Delta)$
equipped with the indefinite inner product
\begin{equation}
\label{eqindefinite-ip}
[\hat{f},\hat{g}]_{J(p)}=\sum_{i=0}^{k-1}\int_{X_i} \hat{g}(\hp)^* J_i \hat{f}(\hp),
\end{equation}
where $J_i$ are the constant values of $J(\hp)$ on the components of
$\partial\hX$ over $X_i$ (see \cite{AlpayVinnikov2000} for the full
definition).
Under the non-degeneracy condition $h^0(X,\Vx{}\otimes\Delta)=0$
this space is a Kre\u{\i}n space.
\subsection{Extension to the double and Cauchy kernels}
Given $\chi$, $J_0,\ldots,J_{k-1}$, the bundle $\VxS{}$ extends uniquely to
a flat unitary vector bundle $\VxX{}$ on $X$ satisfying
\begin{align}
G(\hp^\tau)^*&=G(\hp),\label{eqGsym}
\\
\chi(T^\tau)^*G(T\hp)\chi(T)&=G(\hp),
\label{eqGequiv} 
\end{align}
where $G\colon\hX\to GL(m,\C)$ is the everywhere non-singular holomorphic
matrix function encoding the parahermitian pairing
$\VxX{}\otimes(\VxX{})^\tau\to\mathcal{O}_X$ \cite{AlpayVinnikov2000}, 
Proposition 2.1).
Under $h^0(X,\VxX{}\otimes\Delta)=0$, the bundle
$\VxX{}\otimes\Delta$ on $X$ admits a Cauchy kernel 
$K_{J(p)}\colon S\times S\to\Hom(\C^m,\C^m)$, which is the reproducing
kernel for $H^{2,J(p)}(S,\Vx{}\otimes\Delta)$.
\section{Spin structures and the ramified direct image}
\label{secdirectimage}
\subsection{Setup and notation for the ramified cover}
Let $F\colon S_1\to S_2$ be a finite analytic map of finite bordered Riemann
surfaces, continuous up to the boundary and mapping $\partial S_1$ to
$\partial S_2$.
We assume that $F$ extends to an analytic map
$F\colon X_1\to X_2$ between the doubles that is equivariant with respect to
the anti-holomorphic involutions
$F\circ\tau_1=\tau_2\circ F$.
Let $n=\deg F$ denote the (topological) degree.
The ramification locus is the finite set
$\mathrm{Ram}(F)=\{p\in X_1:e_p(F)>1\}$ 
where $e_p(F)$ is the local degree of $F$ at $p$.
The ramification divisor and branch divisor are 
$R_F=\sum_{p\in X_1}(e_p(F)-1)p$, $B_F=F_*R_F=\sum_{q\in X_2}(r_q-1)q$  
where $r_q=\sum_{F(p)=q}(e_p(F)-1)+1\leq n$ for each $q$. 
By the Riemann-Hurwitz formula,
\begin{equation}
\label{eqRH}
2g_1-2=n(2g_2-2)+\deg R_F.
\end{equation}
We fix basepoints $p_0\in X_2$ not in the branch locus and
$p_0'\in F^{-1}(p_0)\subset X_1$.
\subsection{The spin-structure hypothesis}
\label{subsecspin}
The correct analogue of the unramified condition $\Delta_1=F^*\Delta_2$
in the ramified case involves a square root of $\mathcal{O}_{X_1}(R_F)$.
Since $R_F$ may have odd multiplicities, such a square root is an 
obstruction-theoretic datum, not automatic.
Recall that the canonical bundle satisfies
\begin{equation}
\label{eqcanonical-pullback}
K_{X_1}\cong F^*K_{X_2}\otimes\mathcal{O}_{X_1}(R_F). 
\end{equation}
A spin structure on $X_1$ is a line bundle $\mathcal{L}$ with 
$\mathcal{L}^{\otimes 2}\cong K_{X_1}$.
Given a spin structure $\Delta_2$
on $X_2$, i.e., $\Delta_2^{\otimes 2}\cong K_{X_2}$, the condition we
need is
\begin{equation}
\label{eqspin-compat}
\Delta_1^{\otimes 2}\cong F^*\bigl(\Delta_2^{\otimes 2}\bigr)\otimes\mathcal{O}_{X_1}(R_F).
\end{equation}
This is equivalent to asking for a spin structure $\Delta_1$ on $X_1$
together with an isomorphism $\Delta_1\cong F^*\Delta_2\otimes\mathcal{L}_{1/2}$,
where $\mathcal{L}_{1/2}$ is a square root of $\mathcal{O}_{X_1}(R_F)$.
The existence of a holomorphic line bundle $\mathcal{L}_{1/2}$ satisfying
$\mathcal{L}_{1/2}^{\otimes 2}\cong\mathcal{O}_{X_1}(R_F)$ is equivalent to
the condition that the divisor class $[\mathcal{O}_{X_1}(R_F)]$ belongs to
the subgroup $2\mathrm{Pic}(X_1)\subset\mathrm{Pic}(X_1)$.  Since
$\mathrm{Pic}(X_1)\cong\mathrm{Pic}^0(X_1)\times\Z$ (degree) and
$\mathrm{Pic}^0(X_1)$ is a divisible group, a complex torus, this
reduces to the single numerical condition that $\deg R_F=\sum_p(e_p(F)-1)$
be even.  Requiring all ramification indices $e_p(F)$ to be odd is
a convenient sufficient condition for this, since each
$e_p(F)-1$ is then even, but it is strictly stronger than necessary: for
instance two ramification points each with even index $e_p=2$ already
give $\deg R_F=2$, an even number, thus a square root exists even though no
$e_p$ is odd.

Concerning the spin compatibility, we formulate the following 
\begin{assumption}
\label{assspin}
We assume that there exist spin structures $\Delta_1$ on $X_1$ and
$\Delta_2$ on $X_2$ satisfying \eqref{eqspin-compat}, together with
\begin{enumerate}[label=(\alph*)]
\item $\tau_i$-equivariance $\Delta_i^{\tau_i}\cong\Delta_i$ with 
symmetric transition functions with respect to $\tau_i$, $i=1$, $2$; 
\item a fixed choice of square root $\mathcal{L}_{1/2}$ of  
$\mathcal{O}_{X_1}(R_F)$, i.e., a line bundle with 
$\mathcal{L}_{1/2}^{\otimes 2}\cong\mathcal{O}_{X_1}(R_F)$,  
such that $\Delta_1\cong F^*\Delta_2\otimes\mathcal{L}_{1/2}$. 
\end{enumerate}
\end{assumption}
\begin{remark}
\label{remspin-existence}
By the discussion above, Assumption \ref{assspin}(b) holds whenever
$\deg R_F$ is even. This is the general criterion.  It is satisfied, in
particular, in the following cases 

(i): all ramification indices $e_p(F)$ are odd. 
Then each $e_p(F)-1$ is even, thus $\deg R_F=\sum_p(e_p(F)-1)$ is 
even, and a square root $\mathcal{L}_{1/2}$ exists as a holomorphic 
line bundle. This is a sufficient, not necessary, condition: see 
case (ii) below for an example with even ramification indices where 
a square root still exists;

(ii): $F$ is the $2$-sheeted cover of Example \ref{exdegree2}, single 
ramification point $e=2$ on $S_1$, with a $\tau_1$-mirror point of 
the same index on $S_1'$, so that $\deg R_F=2$ on the double 
$X_1$): here $\mathcal{L}_{1/2}=\mathcal{O}_{X_1}(p)$ 
for the ramification point $p$, since $\mathcal{O}_{X_1}(p)^{\otimes 2}
\cong\mathcal{O}_{X_1}(R_F)$ on $X_1\cong\mathbb{P}^1$ where all 
      points are linearly equivalent up to degree;

(iii): when all ramification indices are even and $\deg R_F$ is odd, 
thus no square root of $\mathcal{O}_{X_1}(R_F)$ exists on $X_1$ itself, 
one can pass to a degree-$2$ 
cover $\tilde{X}_1\to X_1$, i.e., the square root cover, on which the 
pull-back of $R_F$ becomes divisible by $2$.  However, this cover 
need not be canonical or compatible with $\tau_1$ in general, and 
must be chosen as additional data. 
 
When Assumption \ref{assspin} is not satisfied, the half-order differential
$\Delta_1$ is only locally defined, modulo a sign cocycle in $H^1(X_1,\Z/2)$,
and all subsequent formulas are conditional on a choice of global trivialisation
of this cocycle. We do not pursue this generality here.
\end{remark}
\subsection{Construction of the ramified direct image}
\label{subsecram-directimage}
Under Assumption \ref{assspin}, we have a globally defined spin structure
$\Delta_1$ on $X_1$.
For $U\subset X_2$ open, not meeting $B_F$, the
restriction $F^{-1}(U)=U_1\sqcup\cdots\sqcup U_n$ with $F\colon U_i\to U$
biholomorphic, and the direct image $F_*(\VxX{1}\otimes\Delta_1)$ is the
rank-$nm$ bundle with fibre $\bigoplus_{F(p_i)=q}(\VxX{1}\otimes\Delta_1)_{p_i}$.
Near a ramification point $p\in\mathrm{Ram}(F)$ with $F(p)=q$ and
ramification index $e=e_p(F)$, choose local coordinates $t_1$ near $p$ and
$t_2$ near $q$ such that $F$ is given by $t_2=t_1^e$.
In these coordinates
the derivative satisfies $F'(t_1)=et_1^{e-1}$.  A section 
$\hat{f}^1$ of $\VxX{1}\otimes\Delta_1$ near $p$ takes the form
\begin{equation}
\label{eqsection-local}
\hat{f}^1(\hp)\ell_1(\hp),
\end{equation}
where $\hat{f}^1$ is a holomorphic $\C^m$-valued function on the universal
cover satisfying the equivariance condition
\begin{equation}
\label{eqchi1-equiv}
\hat{f}^1(T\hp)=\chi_1(T)\hat{f}^1(\hp), \quad T\in\piX{X_1}{p_0'},
\end{equation}
and $\ell_1(\hp)$ is a local generator of $\Delta_1$ satisfying
$\ell_1(\hp)^{\otimes 2}=dt_1(\hp)$, i.e., $\ell_1=\sqrt{dt_1}$, with the
global ambiguity resolved by Assumption \ref{assspin}. 
\begin{definition}[Ramified direct image section]
\label{deframified-direct-image}
Let $\hat{f}^1$ be a section of $\VxX{1}\otimes\Delta_1$ on $X_1$.
For
$q\in X_2\setminus B_F$ with preimages $p_1'$, $\ldots$, $p_n'\in X_1$, enumerated 
by coset representatives $g_1$, $\ldots$, $g_n$ of $\piX{X_2}{p_0}$ modulo 
$\piX{X_1}{p_0'}$, define the section $\hat{f}^2$ of
$F_*^{\rm ram}(\VxX{1}\otimes\Delta_1)$ by
\begin{equation}
\label{eqf2-ramified}
\hat{f}^2(\hp)=\left[
\frac{\hat{f}^1(g_i\hp)}{\sqrt{\varphi_i'(g_i\hp)}\;\sqrt{e_i(g_i\hp)}}\cdot
\frac{\ell_{1,i}(g_i\hp)}{\ell_2(\hp)}
\right]_{i=1}^n,
\end{equation}
where
$\varphi_i=t_2^{-1}\circ F\circ t_{1,i}$ is the local coordinate 
expression of $F$ near the $i$-th preimage of $q$, 
$e_i = e_{g_i\hp}(F)$ is the ramification index at $g_i\hp$, 
$\ell_{1,i}$ is the local generator of $\Delta_1$ near $g_i\hp$, 
i.e., $\ell_{1,i}^{\otimes 2}=dt_{1,i}$, 
$\ell_2$ is the local generator of $\Delta_2$ near $q$,
i.e., $\ell_2^{\otimes 2}=dt_2$, 
$\sqrt{\varphi_i'(g_i\hp)}$ denotes the branch of the square root  
of $\varphi_i'$ determined by Assumption \ref{assspin}(b) 
via the isomorphism $\Delta_1\cong F^*\Delta_2\otimes\mathcal{L}_{1/2}$. 
\end{definition}
When $e_i=1$ for all $i$ (unramified case) formula \eqref{eqf2-ramified}
reduces to formula (13) of \cite{Zuevsky2009}. Since $\varphi_i'=1$ and
$e_i=1$, one has $\hat{f}^2=([\hat{f}^1(g_i\hp)]_{i=1}^n)\cdot\ell_2/\ell_2=
[\hat{f}^1(g_i\hp)]_{i=1}^n$.
The factor $\sqrt{e_i}$ is needed for $L^2$-isometry: the change of variables
$t_2=t_1^e$ gives $|dt_2|=e|t_1|^{e-1}|dt_1|$, thus 
$\sqrt{|dt_2|}=\sqrt{e}|t_1|^{(e-1)/2}\sqrt{|dt_1|}$. 
\begin{proposition}
\label{propf2-welldefined}
Under Assumption \ref{assspin}, the section $\hat{f}^2$
defined by \eqref{eqf2-ramified} is coordinate-independent and extends
holomorphically across $B_F$ to a global analytic section of a holomorphic
vector bundle of rank $nm$ on $X_2$, which we denote
$F_*^{\rm ram}(\VxX{1}\otimes\Delta_1)$. 
\end{proposition}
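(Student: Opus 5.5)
The plan has three steps: coordinate independence away from the branch locus, a local model at each ramification point, and gluing.

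\emph{Coordinate independence on $X_2\setminus B_F$.} There $F$ is a local biholomorphism on each sheet $U_i\to U$. Suppose we change coordinates $t_{1,i}\mapsto \tilde t_{1,i}$ and $t_2\mapsto\tilde t_2$. Then $\varphi_i'$ transforms by the chain rule,
\[
\tilde\varphi_i'=\varphi_i'\cdot\frac{dt_{1,i}/d\tilde t_{1,i}}{dt_2/d\tilde t_2}.
\]
The generators transform by $\tilde\ell_{1,i}=\ell_{1,i}\,(d\tilde t_{1,i}/dt_{1,i})^{1/2}$ and $\tilde\ell_2=\ell_2\,(d\tilde t_2/dt_2)^{1/2}$. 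Hence the combination
\[
\frac{\ell_{1,i}}{\sqrt{\varphi_i'}\,\ell_2}
\]
is invariant: it is exactly the section of $\Delta_1\otimes F^*\Delta_2^{-1}\cong\mathcal{L}_{1/2}$ given by Assumption \ref{assspin}(b), which is trivial off $R_F$. The sign ambiguities of the three square roots are fixed consistently because all branches are defined through the single isomorphism $\Delta_1\cong F^*\Delta_2\otimes\mathcal{L}_{1/2}$. This gives invariance up to a global, not pointwise, choice.

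\emph{Monodromy.} Next I check independence of the coset representatives $g_i$ and of the lift $\hp$. Replacing $\hp$ by $T\hp$ with $T\in\piX{X_2}{p_0}$ permutes the cosets $g_iT=T_i'g_{\sigma(i)}$ with $T_i'\in\piX{X_1}{p_0'}$. By \eqref{eqchi1-equiv} the vector $\hat f^2$ then transforms by the induced representation: a permutation matrix twisted by $\chi_1(T_i')$, which is block-unitary of size $nm$. This exhibits $\hat f^2$ as a section of a flat rank-$nm$ bundle on $X_2\setminus B_F$, namely the induced bundle $\mathrm{Ind}\,\chi_1\otimes\Delta_2$.

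\emph{Local analysis at $q\in B_F$.} Choose coordinates in which $\varphi_i(t_1)=t_1^{e}$ near a preimage $p$ of index $e$. The $e$ sheets meeting at $p$ correspond to the $e$ branches $t_1=\zeta^k t_2^{1/e}$, $\zeta=e^{2\pi i/e}$. On them the $i$-th component equals, up to the invariant factor above,
\[
\hat f^1(\zeta^k t_2^{1/e})\,\bigl(e\,\zeta^{k(e-1)}t_2^{(e-1)/e}\bigr)^{-1/2}e^{-1/2}.
\]
Expand $\hat f^1(t_1)=\sum_j a_jt_1^j$ and take the discrete Fourier transform over $k\in\Z/e$. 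This replaces the $e$ components by the $e$ combinations
\[
t_2^{(j-(e-1)/2)/e}\sum_{j'\equiv j\ (e)}a_{j'}t_2^{\lfloor j'/e\rfloor},\qquad j=0,\dots,e-1,
\]
each multiplied by a factor $t_2^{(j-(e-1)/2)/e}$. This change of frame is constant and unitary, since $e^{-1/2}(\zeta^{jk})$ is unitary. The fractional powers are then absorbed as follows. I declare the holomorphic frame of $F_*^{\rm ram}$ near $q$ to be $t_2^{-(j-(e-1)/2)/e}$ times the DFT frame, with the half-integer shift accounted for by the local generator of $\mathcal{L}_{1/2}$ at $p$. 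With respect to this frame the coefficient functions are honest holomorphic power series in $t_2$. Its transition functions to the flat frame off $q$ are holomorphic and invertible on a punctured disc. So by Riemann extension these frames define an extension of the bundle across $q$ (the standard construction $F_*$ of a locally free sheaf on a curve is locally free), and $\hat f^2$ is holomorphic in it.

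Finally, I glue these local extensions over the finitely many points of $B_F$ to the flat bundle from the monodromy step, which gives the rank-$nm$ bundle $F_*^{\rm ram}(\VxX{1}\otimes\Delta_1)$ on $X_2$.

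\emph{Main obstacle.} I expect the local step to be the hardest part: matching the branch of $\sqrt{\varphi_i'}\sim\sqrt{e}\,t_1^{(e-1)/2}$ with the square root $\mathcal{L}_{1/2}$ of $\mathcal{O}(R_F)$ when $e$ is even, where $(e-1)/2$ is a half-integer and the sign must be shown globally consistent. I would handle this by working throughout with sections of $\Delta_1\otimes F^*\Delta_2^{-1}\cong\mathcal{L}_{1/2}$ rather than scalar square roots. This reduces consistency to the fixed isomorphism of Assumption \ref{assspin}(b).
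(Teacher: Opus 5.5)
Your coordinate-independence step does not go through. Put $\alpha'=d\tilde t_{1,i}/dt_{1,i}$ and $\beta'=d\tilde t_2/dt_2$. Your own transformation rules, which unlike the paper's have the chain rule the right way round, give $\tilde\ell_{1,i}/\tilde\ell_2=(\alpha'/\beta')^{1/2}\,\ell_{1,i}/\ell_2$ and $\tilde\varphi_i'=(\beta'/\alpha')\,\varphi_i'$. Hence
\[
\frac{\tilde\ell_{1,i}}{\sqrt{\tilde\varphi_i'}\,\tilde\ell_2}
=\frac{\alpha'}{\beta'}\cdot\frac{\ell_{1,i}}{\sqrt{\varphi_i'}\,\ell_2},
\]
which is not invariant. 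The invariant combination is $\sqrt{\varphi_i'}\,\ell_{1,i}/\ell_2$. That is the canonical section of $\Delta_1\otimes F^*\Delta_2^{-1}$ off $R_F$ you have in mind, and it is identically $1$ because $F^*\ell_2=\sqrt{\varphi_i'}\,\ell_{1,i}$. The paper obtains invariance only by writing $\varphi_i'\mapsto\varphi_i'\,\alpha'/\beta'$, the inverse of the chain rule, so following it will not rescue the step. What must change is the reading of \eqref{eqf2-ramified}. Read as a ratio of half-order differentials, $\ell_{1,i}/\ell_2$ already equals $(\varphi_i')^{-1/2}$, so the formula as written has coefficient $\hat f^1/(\sqrt{e_i}\,\varphi_i')$ with respect to $\ell_2$. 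The intrinsic object is $\hat f^1\ell_{1,i}=(\hat f^1/\sqrt{\varphi_i'})\,F^*\ell_2$. Once the Jacobian is counted only once, coordinate independence is immediate.

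Your claim that the isomorphism of Assumption \ref{assspin}(b) fixes all branches of $\sqrt{\varphi_i'}$ consistently also fails at points of even index. There $\sqrt{\varphi'}=\sqrt{e}\,t_1^{(e-1)/2}$ changes sign along a small loop around $p$. Nor can $\mathcal{L}_{1/2}$ absorb the half-integer power: no holomorphic local section has vanishing order $(e-1)/2\notin\Z$, and a local generator does not vanish at all. Consequently, when some $e$ is even, the local system carrying $\hat f^2$ off $B_F$ is not $\mathrm{Ind}\,\chi_1\otimes\Delta_2$. On the $e$ sheets through $p$, its monodromy at $q$ is a cyclic permutation with one sign flip, with eigenvalues the $e$-th roots of $-1$. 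Also, the sheets are permuted by $\pi_1(X_2\setminus B_F,p_0)$, not by $\pi_1(X_2,p_0)$, which does not act on the fibre of a ramified cover (consider $z\mapsto z^2$ on $\mathbb{P}^1$).

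Your local step actually sees the correct monodromy. The exponents $(j-(e-1)/2)/e$ are exactly what make your new frame single-valued on the punctured disc, so no Riemann extension is involved. For even $e$, though, the constant unitary frame change must be the twisted transform with kernel $\zeta^{-k(j-(e-1)/2)}$, where $\zeta^{x}:=\exp(2\pi i x/e)$, not the characters $\zeta^{-jk}$ of $\Z/e$. With that adjustment, writing $\hat f^1=\sum_{j=0}^{e-1}t_1^j h_j(t_1^e)$ identifies the extension with the direct-image sheaf, a rank-$nm$ bundle, which is all the proposition asks for. This is a genuinely different and sounder route than the paper's, which infers componentwise holomorphy from $\ell_1/(\ell_2\sqrt{\varphi'})=u_1/\sqrt{e}$. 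That inference overlooks that each component is a function of $t_2^{1/e}$ permuted by the local monodromy. So in this part only the identification in your monodromy step needs correcting; the substantive gap is the invariance claim of your first step.
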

\begin{proof}
\textit{Coordinate independence.}  
Let $\tilde{t}_1=\alpha(t_1)$ and $\tilde{t}_2=\beta(t_2)$ be changes of
coordinates near $p$ and $q$ respectively, so that
$\tilde{t}_2=\tilde{t}_1^e$ as well.
Under these changes
$\ell_{1,i}\mapsto\ell_{1,i}\cdot(\alpha')^{1/2}$, 
$\ell_2\mapsto\ell_2\cdot(\beta')^{1/2}$, 
$\varphi_i'\mapsto\varphi_i'\cdot\frac{\alpha'}{\beta'}$.  
The combination $\frac{\ell_{1,i}}{\ell_2\sqrt{\varphi_i'}}$ transforms as
\[
\frac{\ell_{1,i}\cdot(\alpha')^{1/2}}{\ell_2\cdot(\beta')^{1/2}
\cdot\sqrt{\varphi_i'\cdot\alpha'/\beta'}}=
\frac{\ell_{1,i}\cdot(\alpha')^{1/2}}{\ell_2\cdot(\alpha')^{1/2}\cdot
\sqrt{\varphi_i'}}=\frac{\ell_{1,i}}{\ell_2\sqrt{\varphi_i'}}, 
\]
which is invariant.
The factor $\hat{f}^1(g_i\hp)$ is a section of $\VxX{1}$
(a flat bundle) and transforms equivariantly.  Hence $\hat{f}^2$ is
coordinate-independent.
The sign ambiguity in $(\alpha')^{1/2}$ is
consistent across coordinate changes by Assumption \ref{assspin}(b).

\textit{Holomorphic extension across $B_F$.}
Near a ramification point $q=F(p)$ with $e=e_p(F)$ and $t_2=t_1^e$, the
derivative is $\varphi'(t_1)=et_1^{e-1}$.
By Assumption \ref{assspin}(b),
the local generator of $\Delta_1$ at $p$ satisfies
$\ell_1=t_1^{(e-1)/2}\sqrt{dt_1}\cdot u_1$,  
where $u_1$ is a non-vanishing holomorphic function with the local trivialisation
of $\mathcal{L}_{1/2}$ at $p$. This is holomorphic since $\mathcal{L}_{1/2}$
is a holomorphic line bundle by Assumption \ref{assspin}(b). 
Thus
\[
\frac{\ell_{1}}{\ell_2\sqrt{\varphi'}}=\frac{t_1^{(e-1)/2}\sqrt{dt_1}\cdot u_1} 
{\sqrt{dt_2}\cdot\sqrt{e\;t_1^{e-1}}}=\frac{t_1^{(e-1)/2}\cdot u_1}{\sqrt{e}\;t_1^{(e-1)/2}}
=\frac{u_1}{\sqrt{e}}, 
\]
which is holomorphic and non-vanishing at $t_1=0$. 
Here we use the relation $\sqrt{dt_2}=\sqrt{e}t_1^{(e-1)/2}\sqrt{dt_1}$, i.e.,
$\ell_2\circ F=\sqrt{e}t_1^{(e-1)/2}\ell_1\cdot u_1^{-1}$ locally.
Since $\hat{f}^1$ is holomorphic at $p$, the expression $\hat{f}^1(g_i\hp)/
(\sqrt{\varphi_i'}\sqrt{e_i})\cdot\ell_{1,i}/\ell_2$ extends holomorphically 
across $q$.
The argument is identical at all ramification points.
\end{proof}
The holomorphic vector bundle $F_*^{\rm ram}(\VxX{1}\otimes\Delta_1)$ of rank $nm$
on $X_2$ constructed in Proposition \ref{propf2-welldefined} is the correct
replacement for the naive pushforward: it coincides with $F_*(\VxX{1}\otimes\Delta_1)$
away from $B_F$ and extends holomorphically across $B_F$ where the naive
pushforward would have apparent singularities from the Jacobian factors. 
The isomorphism $F_*^{\rm ram}(\VxX{1}\otimes\Delta_1)\cong\VxX{2}\otimes\Delta_2$
as coherent sheaves on all of $X_2$ is established in Theorem \ref{thmmain}(i). 
\section{Representation \texorpdfstring{$\chi_2$}{chi2} of
\texorpdfstring{$\pi_1(X_2,p_0)$}{pi1(X2,p0)}}
\label{secchi2}
\subsection{Definition of $\chi_2$ in the ramified case}
The derivation of $\chi_2$ from formula (15) of \cite{Zuevsky2009} is
algebraic and independent of whether $F$ is ramified or unramified. 
  The only data used is the permutation representation of $\piX{X_2}{p_0}$ 
on the $n$ preimages of $p_0$. We therefore have 
\begin{proposition}
\label{propchi2}
Define the matrix representation $\chi_2\colon\piX{X_2}{p_0}\to U(nm)$ by
\begin{equation}
\label{eqchi2}
\bigl[\chi_2(g)\bigr]_{kj}
=\chi_1\left(g_kgg_{\sigma_g(k)}^{-1}\right)\delta_{\sigma_g(k),j}, 
\end{equation}
where $g_1$, $\ldots$, $g_n$ are coset representatives of $\piX{X_2}{p_0}$
modulo $\piX{X_1}{p_0'}$, and $\sigma_g$ is the permutation of $\{1,\ldots,n\}$
defined by $g_ig=hg_{\sigma_g(i)}$ with $h\in\piX{X_1}{p_0'}$.
Then, 
\begin{enumerate}[label=(\roman*)]
\item $\chi_2$ is a homomorphism;
\item $\chi_2(g)$ is unitary for every $g\in\piX{X_2}{p_0}$;
\item $\Vx{2}=F_*\Vx{1}$ as flat holomorphic vector bundles on $X_2$.
\end{enumerate}
\end{proposition}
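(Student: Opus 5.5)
This is the standard induced-representation argument: $\chi_2$ should be $\mathrm{Ind}_{\pi_1(X_1,p_0')}^{\pi_1(X_2,p_0)}\chi_1$ written in the basis of the cosets $\pi_1(X_1,p_0')g_i$. I will prove (i)–(iii) in turn, after checking that the data are well defined.

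\emph{Well-definedness.} Since $F$ is a covering over $X_2\setminus B_F$, right multiplication by $g$ permutes the right cosets $\pi_1(X_1,p_0')g_i$. Hence for each $i$ there is a unique $\sigma_g(i)$ and a unique $h=g_igg_{\sigma_g(i)}^{-1}\in\pi_1(X_1,p_0')$, so $\chi_1(h)$ is defined. Here $\pi_1(X_1,p_0')$ is identified with its image under $F_*$, which has index $n$. On the ramified surface this requires a brief justification. The puncture-filling maps $\pi_1(X_2\setminus B_F)\to\pi_1(X_2)$ are surjective, and I take the coset data to be the monodromy data of the unramified part.

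\emph{Homomorphism.} The key identity is $\sigma_{gg'}=\sigma_{g'}\circ\sigma_g$. It follows from $g_igg'=h\,g_{\sigma_g(i)}g'=h\,h'\,g_{\sigma_{g'}(\sigma_g(i))}$, where $h'=g_{\sigma_g(i)}g'g_{\sigma_{g'}(\sigma_g(i))}^{-1}\in\pi_1(X_1,p_0')$. Now compute the $(k,j)$ entry of $\chi_2(g)\chi_2(g')$. It equals $\sum_l\chi_1(g_kgg_l^{-1})\delta_{\sigma_g(k),l}\,\chi_1(g_lg'g_j^{-1})\delta_{\sigma_{g'}(l),j}$. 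Only $l=\sigma_g(k)$ contributes, and multiplicativity of $\chi_1$ gives $\chi_1(g_kgg'g_j^{-1})\delta_{\sigma_{g'}\sigma_g(k),j}$, which is $[\chi_2(gg')]_{kj}$. The identity goes to the identity because $\sigma_e=\mathrm{id}$ and $g_kg_k^{-1}=e$.

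\emph{Unitarity.} $\chi_2(g)$ is a block permutation matrix whose nonzero blocks are unitary $m\times m$ matrices $\chi_1(\cdot)$. Hence $\chi_2(g)^*\chi_2(g)$ is block diagonal with blocks $\chi_1(\cdot)^*\chi_1(\cdot)=I_m$, so it equals $I_{nm}$.

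\emph{Direct image.} On $X_2\setminus B_F$, a flat section of $F_*\Vx{1}$ over $q$ is the tuple $\bigl(\hat f^1(g_i\hp)\bigr)_{i=1}^n$, as in Definition~\ref{deframified-direct-image} without the spin factors. Applying a deck transformation $g$ and using the equivariance \eqref{eqchi1-equiv}, we get $\hat f^1(g_ig\hp)=\chi_1(g_igg_{\sigma_g(i)}^{-1})\hat f^1(g_{\sigma_g(i)}\hp)$. This is exactly the equivariance law of $\chi_2$ in \eqref{eqchi2}, so the two flat bundles agree on $X_2\setminus B_F$.

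\emph{Main obstacle.} The hard step is extending this identification across $B_F$. A flat bundle with unitary monodromy extends across a puncture as a flat bundle exactly when the local monodromy around that puncture is trivial. Near a branch point the local loop permutes the $e$ sheets cyclically. I would handle this by using that $\chi_2$ is already defined on $\pi_1(X_2,p_0)$, where small loops around $B_F$ are null-homotopic. Consequently the monodromy of $F_*\Vx{1}$ around $B_F$, computed via $\chi_2$, is the identity. The holomorphic extension is then the unique flat one, and it matches the extension of Proposition~\ref{propf2-welldefined} as a coherent sheaf. I expect the delicate point to be justifying that the coset permutation data are well defined on $\pi_1(X_2)$ rather than only on $\pi_1(X_2\setminus B_F)$. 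This requires the local cyclic monodromy to be absorbed by the identification of $\pi_1(X_1,p_0')$ with its image, and I would state this explicitly as an interpretation of the hypothesis.
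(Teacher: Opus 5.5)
Your verifications of (i) and (ii) are the standard induced-representation computation. They are correct wherever the coset data $(g_i,\sigma_g)$ exist. Your equivariance check for (iii) over $X_2\setminus B_F$ is also right.

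The gap is exactly at the step you flag, and your proposed resolution does not work. When $F$ is genuinely ramified, $F_*\pi_1(X_1,p_0')$ does not have index $n$ in $\pi_1(X_2,p_0)$, and $F_*$ need not be injective. For $z\mapsto z^2$ on $\mathbb{P}^1$ both groups are trivial; in the hyperelliptic example a genus-$2$ surface group maps to the trivial group. So there are no $n$ cosets, and $\chi_1(g_kgg_{\sigma_g(k)}^{-1})$ is not well defined in general. The permutation data genuinely live on $\pi_1(X_2\setminus B_F,p_0)$.

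Surjectivity of $\pi_1(X_2\setminus B_F,p_0)\to\pi_1(X_2,p_0)$ points the wrong way. To descend, you need $\chi_2$ to be trivial on the kernel, which is normally generated by small loops $\gamma_q$ around the branch values. But $\sigma_{\gamma_q}$ has cycle type $(e_p)_{p\in F^{-1}(q)}$ with some $e_p\ge 2$. Hence $\chi_2(\gamma_q)$ is a block permutation matrix with a zero diagonal block, so $\chi_2(\gamma_q)\ne I_{nm}$. Your sentence ``small loops around $B_F$ are null-homotopic in $X_2$, so the monodromy computed via $\chi_2$ is the identity'' presupposes the very factorisation through $\pi_1(X_2,p_0)$ that is at issue. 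It is circular, and its conclusion is false.

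Nor can (iii) be rescued on all of $X_2$ by choosing a different flat structure. The direct image sheaf is locally free of rank $nm$. Since $F$ is finite, $\chi(X_2,F_*\Vx{1})=\chi(X_1,\Vx{1})$, and Riemann--Roch with \eqref{eqRH} gives $\deg F_*\Vx{1}=m(1-g_1)-nm(1-g_2)=-\tfrac{m}{2}\deg R_F<0$. Every flat bundle on a compact Riemann surface has degree $0$. Concretely, in Example~\ref{exdegree2} with $m=1$ and $\chi_1\equiv 1$, one has $F_*\mathcal{O}_{\mathbb{P}^1}\cong\mathcal{O}\oplus\mathcal{O}(-1)$.

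So your argument actually establishes (i) and (ii) for $\chi_2$ as a representation of $\pi_1(X_2\setminus B_F,p_0)$, and (iii) over $X_2\setminus B_F$ only. Across $B_F$ the direct image extends holomorphically, but its natural connection has logarithmic singularities with nontrivial finite-order local monodromy, rather than a flat unitary extension. The paper's proof simply defers to the unramified case and asserts that ramification does not enter, so it passes over this same point. No better argument can close the gap: the statement has to be restricted to $X_2\setminus B_F$.
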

\begin{proof}
The proof is identical to that in \cite{Zuevsky2009}, Section 5. 
The ramification of $F$ does not enter the algebraic verification. 
\end{proof}
\subsection{The matrix function $G_2$ in the ramified case}
The matrix function $G_2\colon\hX\to GL(nm,\C)$ encoding the
parahermitian structure on $X_2$ is defined by the same formula as
in \cite{Zuevsky2009}, equation (18)
\begin{equation}
\label{eqG2}
\bigl[G_2(\hp)\bigr]_{kj}=G_1(g_k^\tau\hp)\chi_1(h_k)\delta_{\nu(k),j},
\end{equation}
where $g_k^\tau=h_kg_{\nu(k)}$ with $h_k\in\piX{X_1}{p_0'}$ defines
the index function $k\mapsto\nu(k)$.
The matrix $G_2$ satisfies
$G_2(\hp^\tau)^*=G_2(\hp)$ and
$\chi_2(T^\tau)^*G_2(T\hp)\chi_2(T)=G_2(\hp)$
for all $T\in\piX{X_2}{p_0}$, i.e., conditions \eqref{eqGsym}-\eqref{eqGequiv}
hold for $(X_2,\chi_2,G_2)$.
\begin{lemma}
\label{lemG2-holomorphic}
The function $G_2(\hp)$ defined by \eqref{eqG2} is holomorphic and
everywhere non-singular on $\hX$, hence defines a parahermitian
non-degenerate bilinear pairing $\VxX{2}\otimes(\VxX{2})^{\tau_2}\to\mathcal{O}_{X_2}$. 
\end{lemma}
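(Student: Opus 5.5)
The plan is to reduce everything to the corresponding properties of $G_1$ from \cite{AlpayVinnikov2000}, Proposition~2.1, applied on $X_1$. The key structural point is that $G_2$ is a block monomial matrix. For each $k$ exactly one block in row $k$ is nonzero, namely the block in column $\nu(k)$. We check first that $k\mapsto\nu(k)$ is a permutation of $\{1,\ldots,n\}$. This follows because $g\mapsto g^\tau$ induces an involutive bijection on the coset space $\piX{X_1}{p_0'}\backslash\piX{X_2}{p_0}$. That in turn uses $F\circ\tau_1=\tau_2\circ F$, so that $\tau$ preserves the subgroup $\piX{X_1}{p_0'}$; basepoint issues are handled as in the appendix, choosing $p_0$ on $\partial S_2$ or transporting along a path. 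So $G_2=P\cdot\mathrm{diag}(\ldots)$ up to a block permutation matrix $P$, and therefore
\[
\det G_2(\hp)=\pm\prod_{k=1}^n\det G_1(g_k^\tau\hp)\,\det\chi_1(h_k).
\]

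\textbf{Holomorphy.} Each block $G_1(g_k^\tau\hp)\chi_1(h_k)$ is the composition of the holomorphic function $G_1$ on the universal cover with the deck transformation $g_k^\tau$, times a constant matrix. The universal cover of $X_1$ and of $X_2$ coincide, because $F$ is a covering away from the branch locus. At the ramification points we must be careful: there $\hX$ should be taken as the universal cover of $X_2$ and pulled back. I would argue that $G_1$ is defined and holomorphic on all of $\hX_1$, including preimages of $\mathrm{Ram}(F)$, because Proposition~2.1 of \cite{AlpayVinnikov2000} gives it globally on $X_1$. Composing with $F$-lifts keeps it holomorphic. Unlike the sections in Proposition~\ref{propf2-welldefined}, no Jacobian factor enters $G_2$, so no ramification correction is needed.

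\textbf{Non-singularity.} $G_1$ is everywhere invertible and $\chi_1(h_k)$ is unitary, so the determinant formula above is nonvanishing everywhere. This gives $G_2(\hp)\in GL(nm,\C)$.

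\textbf{Pairing.} Finally, the stated identities $G_2(\hp^\tau)^*=G_2(\hp)$ and $\chi_2(T^\tau)^*G_2(T\hp)\chi_2(T)=G_2(\hp)$, together with Proposition~\ref{propchi2}, show that $(f,g)\mapsto g(\hp^\tau)^*G_2(\hp)f(\hp)$ descends to a well-defined, parahermitian, nondegenerate pairing $\VxX{2}\otimes(\VxX{2})^{\tau_2}\to\mathcal{O}_{X_2}$, exactly as in \cite{Zuevsky2009}.

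The main obstacle is the behaviour over the branch locus. $\piX{X_2}{p_0}$ does not act as deck transformations of a covering from $X_1$ near branch points, so "$G_1(g_k^\tau\hp)$" must be interpreted via analytic continuation along loops avoiding $B_F$. I would handle this by first establishing holomorphy and invertibility on $X_2\setminus B_F$. I would then note that near $q\in B_F$ the blocks are given by $G_1\circ(\text{local branch of }F^{-1})$, which permute among themselves under monodromy. Hence $G_2$ is bounded, so the singularity at $q$ is removable by Riemann's theorem. Invertibility then follows since $\det G_2$ is bounded away from $0$ by the product formula together with the continuity of $G_1$ at the ramification points.
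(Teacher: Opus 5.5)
Away from the branch locus your argument is correct and is the paper's proof made explicit. The paper only says that holomorphy and non-singularity are inherited from $G_1$ and that the parahermitian identities are a direct check. Your block-monomial structure (with $\nu$ a permutation because $\tau$ acts involutively on cosets) and your determinant formula are exactly what that means.

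The genuine gap is your treatment of $B_F$, which the paper's short proof does not address either.

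First, the universal covers of $X_1$ and $X_2$ do not coincide when $F$ is ramified. Only $X_1\setminus F^{-1}(B_F)\to X_2\setminus B_F$ is a covering, and $\piX{X_1}{p_0'}$ is not an index-$n$ subgroup of $\piX{X_2}{p_0}$: in Example \ref{exdegree2} both doubles are $\mathbb{P}^1$, so there is one coset but two sheets. The cosets $g_k$, the map $\nu$, $\chi_2$ and $G_2$ therefore make sense only for $\piX{X_2\setminus B_F}{p_0}$. Moreover, $\chi_2$ has a non-trivial permutation monodromy around each $q\in B_F$.

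Second, precisely because the blocks are permuted by this monodromy, $G_2$ is not a single-valued function of a local coordinate $w$ at $q$. Its blocks are $G_1$ evaluated at $t_1=\zeta^j w^{1/e}$, with $\zeta$ a primitive $e$-th root of unity. A bounded function on the universal cover of a punctured disc need not extend (think of $w^{1/2}$), so Riemann's removable singularity theorem does not apply. Likewise, bounding $\det G_2$ away from $0$ only gives invertibility in the sheetwise frame, which is multivalued and degenerates at $q$.

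The step cannot be repaired as stated. A single-valued holomorphic frame of the natural extension $F_*\VxX{1}$ at $q$ is built from $1,t_1,\ldots,t_1^{e-1}$. Its relation to the sheetwise frame is a Vandermonde matrix in the $e$-th roots of $w$, whose determinant vanishes at $q$, so the Gram matrix of the pairing degenerates there.

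Concretely, take $F(z)=z^2$, $m=1$, $G_1\equiv 1$, and write $f=a(w)+z\,b(w)$, $g=c(w)+z\,d(w)$. Then the sheetwise sum is $\sum_{z^2=w}f(z)g(z)=2(ac+w\,bd)$, with Gram matrix $\mathrm{diag}(2,2w)$; the $\tau$-twisted version gives the same.

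Your remark that no Jacobian factor enters $G_2$, hence no correction is needed, is therefore backwards. It is the factor $dz/dw=1/\varphi'$, supplied by the trace of differentials on the $\Delta$-twisted bundle, that makes the pushed-forward pairing perfect: for $f\sqrt{dz}$, $g\sqrt{dz}$ one gets $(ad+bc)\,dw$.

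What your argument actually establishes, like the paper's, is the statement over $X_2\setminus B_F$, with $\chi_2$ a representation of $\piX{X_2\setminus B_F}{p_0}$.
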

\begin{proof}
Holomorphicity and non-singularity follow from the corresponding
properties of $G_1$ and the fact that $g_k^\tau\hp$ lies in the domain
where $G_1$ is defined and non-singular.
The parahermitian property is a
direct verification using \eqref{eqG2} and the identity \eqref{eqGsym} for $G_1$. 
\end{proof}
\section{Inner product on the target Hardy space}
\label{secpairing}
\subsection{Boundary hypothesis}
We require the following assumption throughout the rest of the paper.
Concerning the boundary transversality, we formulate the following 
\begin{assumption}
\label{assboundary}
The branch locus $B_F$ is disjoint from $\partial S_2$
\begin{equation}
\label{eqboundary-disjoint}
\partial S_2\cap B_F=\left\{\varnothing\right\}. 
\end{equation}
\end{assumption}
Assumption \ref{assboundary} is not automatic from the condition
$F(\partial S_1)=\partial S_2$ alone.
Indeed, an analytic map of bordered
surfaces may have ramification points lying on the boundary $\partial S_1$,
whose images then lie on $\partial S_2\subset B_F$.
For maps with boundary
ramification, the inner product formula \eqref{eqip2} would require
additional correction terms at the boundary ramification points.
Assumption \ref{assboundary} excludes this possibility.  It is satisfied in
all examples of Section \ref{secexamples}. In Example \ref{exdegree2},
the unique ramification point is $z=0\in S_1$, which maps to $0\in S_2=\D$,
an interior point. 
\subsection{Signature matrices for $S_2$ in the ramified case}
The signature matrix $J_2(\hp)$ for $(S_2,\chi_2,G_2)$ is defined by
\begin{equation}
\label{eqJ2-formula} 
\bigl[J_2(\hp)\bigr]_{kj}=J_1(g_k\;\hp)\delta_{kj}, 
\end{equation}
for $\hp\in\hX$ lying over a point of $X_{2,f}=\partial S_2$.
One verifies directly that $J_2(\hp)^*=J_2(\hp)$ and
$\chi_2(T)^*J_2(T\hp)\chi_2(T)=J_2(\hp)$ for all $T\in\piX{X_2}{p_0}$.
\subsection{The indefinite inner product}
For sections $\hat{f}^2$, $\hat{g}^2$ of $\VxX{2}\otimes\Delta_2$ on $S_2$ 
with boundary values in $L^2$, define 
\begin{equation}
\label{eqip2}
\bigl[\hat{f}^2,\hat{g}^2\bigr]_{H^{2,J_2(p)}(S_2,\VxS{2}\otimes\Delta_2)}
=\int_{\tilde{X}_{2,f}}\hat{g}^2(\hp^{\tau_2})^*G_2(\hp)\;\hat{f}^2(\hp).
\end{equation}
When $F$ is unramified this coincides with the formula in
\cite{Zuevsky2009}, equation (20).
Under Assumption \ref{assboundary}, $\partial S_2$ does not meet $B_F$,
thus the integration contour $X_{2,f}=\partial S_2$ avoids all branch points.
Combined with Lemma \ref{lemG2-holomorphic}, this ensures that the
integrand is smooth and that \eqref{eqip2} is well-defined and non-singular
along $\partial S_2$.
No correction terms are needed.
\section{Main Theorem: isometric isomorphism for ramified coverings}
\label{secmainthm}
\begin{theorem}
\label{thmmain}
Let $F\colon S_1\to S_2$ be a finite $n$-sheeted analytic map of finite
bordered Riemann surfaces, continuous up to the boundary, and let
$F\colon X_1\to X_2$ be the induced map of doubles equivariant with
respect to $\tau_1$, $\tau_2$. 
Let $\chi_1\colon\piX{S_1}{p_0'}\to U(m)$
be a unitary representation, $J_1(\hp)$ be signature matrices for $\Vx{1}$,
and assume
\begin{equation}
\label{eqnodeg}
h^0\;\left(X_1,\VxX{1}\otimes\Delta_1\right)=0. 
\end{equation}
Suppose Assumptions \ref{assspin} and \ref{assboundary} hold.
Define $\chi_2$, $G_2$, $J_2$ by equations \eqref{eqchi2}-\eqref{eqJ2-formula},
and the map $\phi_F$ by formula \eqref{eqf2-ramified}.
Then
\begin{enumerate}[label=(\roman*)]
\item\label{itbundle}
$F_*^{\rm ram}(\VxX{1}\otimes\Delta_1)\cong\VxX{2}\otimes\Delta_2$
as holomorphic vector bundles, in particular, as coherent sheaves, 
on all of $X_2$;
\item\label{itnodeg2}
$h^0(X_2,\VxX{2}\otimes\Delta_2)=0$, thus the space
$H^{2,J_2(p)}(S_2,\VxS{2}\otimes\Delta_2)$ is non-degenerate
and hence a Kre\u{\i}n space;
\item\label{itisom}
The map $\phi_F\colon\hat{f}^1\mapsto\hat{f}^2$ defined
by \eqref{eqf2-ramified} is an isometric isomorphism
\[
\phi_F\colon H^{2,J_1(p)}\;\left(S_1,\VxS{1}\otimes\Delta_1\right)\xrightarrow{\sim}
H^{2,J_2(p)}\;\left(S_2,\VxS{2}\otimes\Delta_2\right).
\]
\end{enumerate}
\end{theorem}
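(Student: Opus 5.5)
We prove the three parts in order, since (ii) and (iii) both rest on the identification in (i).

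\textbf{Part (i).} Away from $B_F$ the identification $F_*^{\rm ram}(\VxX{1}\otimes\Delta_1)\cong\VxX{2}\otimes\Delta_2$ is the unramified statement of \cite{Zuevsky2009}. We combine Proposition \ref{propchi2}(iii) ($\VxX{2}=F_*\VxX{1}$ as flat bundles on $X_2\setminus B_F$) with the local isomorphism $\Delta_1\cong F^*\Delta_2\otimes\mathcal{L}_{1/2}$ of Assumption \ref{assspin}(b). Concretely, formula \eqref{eqf2-ramified} sends a local frame of $\VxX{1}\otimes\Delta_1$ over the $n$ sheets to the $\chi_2$-equivariant vector $[\cdot]_{i=1}^n$ times $\ell_2$. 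Proposition \ref{propf2-welldefined} shows this is coordinate independent. The transition functions of the image therefore coincide with those of $\VxX{2}\otimes\Delta_2$, namely $\chi_2$ times the spin cocycle of $\Delta_2$.

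Across a branch value $q$ we use the local computation in the proof of Proposition \ref{propf2-welldefined}. The factor $\ell_{1}/(\ell_2\sqrt{\varphi'})=u_1/\sqrt e$ is holomorphic and non-vanishing, so the bundle map defined on a punctured neighbourhood of $q$ extends holomorphically with invertible matrix at $q$. We then invoke Riemann's removable singularity theorem for the bundle map and its inverse. This gives an isomorphism of locally free sheaves on all of $X_2$.

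The hardest point is the monodromy around $q$. The sheets meeting at a ramification point are permuted cyclically, so one must check that the cyclic permutation block of $\chi_2$ is compatible with the branch of $t_1^{(e-1)/2}$ fixed by $\mathcal{L}_{1/2}$. I would verify this by writing $t_1=t_2^{1/e}$ on the $e$ sheets. The sign change of $\sqrt{\varphi_i'}$ under $t_1\mapsto \zeta t_1$ should then be absorbed by the matching change of $\ell_{1,i}$, which is exactly the invariance computation of Proposition \ref{propf2-welldefined}.

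\textbf{Part (ii).} By (i) and the projection formula for finite maps, $H^0(X_2,\VxX{2}\otimes\Delta_2)\cong H^0(X_2,F_*^{\rm ram}(\VxX{1}\otimes\Delta_1))\cong H^0(X_1,\VxX{1}\otimes\Delta_1)$. The inverse map reads off the components $\hat f^1(g_i\hp)$ from $\hat f^2$ using \eqref{eqf2-ramified}. The invertible factors $u_1/\sqrt e$ keep holomorphy at ramification points in both directions. Hypothesis \eqref{eqnodeg} then gives vanishing, and the Kre\u{\i}n property follows from the result recalled in Section \ref{secprelim}.

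\textbf{Part (iii).} Bijectivity of $\phi_F$ on Hardy spaces follows from (i), restricted to $S_2$, together with the inverse reconstruction above, once $L^2$ boundary behaviour is known. By Assumption \ref{assboundary}, near $\partial S_2$ the map $F$ is an unramified $n$-sheeted covering $\partial S_1\to\partial S_2$. There $e_i=1$ and the factors are smooth and bounded, so boundary values correspond in $L^2$.

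For the isometry, substitute \eqref{eqf2-ramified} into \eqref{eqip2}. The block structure of $G_2$ in \eqref{eqG2} and the $\tau$-behaviour of the coset representatives split the integral over $\tilde X_{2,f}$ into $n$ integrals over the lifts. On each lift the factor $\ell_{1,i}(\hp^{\tau})^*\ell_{1,i}/(\ell_2^*\ell_2|\varphi_i'|)$ is precisely the change of variables $|dt_2|\mapsto|dt_1|$; the required symmetry of the transition functions is Assumption \ref{assspin}(a). Summing over $i$ reassembles $\int_{\tilde X_{1,f}}\hat g^1(\hp^{\tau_1})^*G_1\hat f^1$, which equals $[\hat f^1,\hat g^1]_{J_1}$. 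This computation is identical to \cite{Zuevsky2009}, because ramification never touches the contour.
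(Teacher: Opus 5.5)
\textbf{The monodromy step fails, and the rest of the proof depends on it.} Your three steps mirror the paper's proof: the local computation of Proposition \ref{propf2-welldefined} for (i), $H^0(X_2,F_*\mathcal{E})=H^0(X_1,\mathcal{E})$ for (ii), and the unramified boundary change of variables for (iii). But the step you single out as the hardest does not go through, and (ii) and the bijectivity in (iii) depend on it.

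Let $p\in F^{-1}(q)$ with $e=e_p(F)\geq 2$ and $t_2=t_1^e$, and let $\gamma_q$ be a small loop around $q$.
\begin{itemize}
\item Continuation along $\gamma_q$ replaces $t_1$ by $\zeta t_1$, $\zeta=e^{2\pi i/e}$. So the $e$ entries of \eqref{eqf2-ramified} belonging to the sheets through $p$, built from $\hat{f}^1(\zeta^k t_2^{1/e})$, are cyclically permuted.
\item This is a permutation matrix. The sign ambiguity of $\sqrt{\varphi_i'}$ and $\ell_{1,i}$ is only a scalar attached to each entry separately, and an identity $c\,\hat{f}^1(z_1)=\tilde{c}\,\hat{f}^1(z_2)$ with $z_1\neq z_2$ cannot hold for all $\hat{f}^1$.
\item The invariance computation of Proposition \ref{propf2-welldefined} concerns reparametrisations $\alpha,\beta$ of $t_1,t_2$, not continuation along $\gamma_q$, so it does not address this.
\item On the other hand, $\gamma_q$ is null-homotopic in $X_2$. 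For any representation of $\piX{X_2}{p_0}$ the flat frame of $\VxX{2}$ is therefore single-valued on a disc about $q$.
\end{itemize}
Consequently there is no single-valued bundle map on the punctured disc for Riemann's theorem to extend. Holomorphy of $u_1/\sqrt{e}$ in $t_1$ is not holomorphy in $t_2$.

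The same obstruction shows up group-theoretically. For genuinely ramified $F$, the subgroup $F_*\piX{X_1}{p_0'}$ has index $d<n$ in $\piX{X_2}{p_0}$: $F$ lifts to the degree-$d$ unramified cover attached to this subgroup, and $d=n$ would make the lift biholomorphic and hence $F$ unramified. So the $n$ coset representatives in \eqref{eqchi2} exist only in $\pi_1(X_2\setminus B_F)$. There the induced representation has nontrivial local monodromy at $B_F$, so it does not define a flat unitary bundle on the compact double, which is what the Cauchy kernel and $G_2$ require.

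\textbf{What is missing, seen in the simplest case.} Take $F(z)=z^2$ on $\mathbb{P}^1$, $m=1$, $\chi_1=1$. Since $\piX{X_2}{p_0}$ is trivial, $\VxX{2}$ would have to be the trivial flat bundle of rank $2$. Yet a vector whose entries are scalar multiples of $f(\sqrt{w})$ and $f(-\sqrt{w})$ is not single-valued around $w=0$ for general $f$, because continuation exchanges the two sheets.

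Here $F_*\Delta_1\cong\Delta_2^{\oplus 2}$ is nevertheless true, but via $f\sqrt{dz}\mapsto(f_e\sqrt{dw},f_o\sqrt{dw})$ with $f(z)=f_e(z^2)+zf_o(z^2)$. This map is also isometric on $|w|=1$. It is related to the sheet values by $\begin{pmatrix}1&z\\1&-z\end{pmatrix}$, which degenerates at the branch point.

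In general you would have to:
\begin{itemize}
\item pass to such a monodromy-adapted frame;
\item control its degeneracy along $B_F$ using $\mathcal{L}_{1/2}$;
\item prove that $F_*(\VxX{1}\otimes\mathcal{L}_{1/2})\cong F_*(\VxX{1}\otimes\Delta_1)\otimes\Delta_2^{-1}$, which has degree zero, is actually flat unitary on $X_2$.
\end{itemize}
Degree zero alone does not give flatness, and by the index count above \eqref{eqchi2} cannot describe the resulting representation. None of this is supplied. The paper's proof of (i) simply asserts the $\chi_2$-equivariance of $\hat{f}^2$, so citing it does not close the gap.

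\textbf{Surjectivity in (iii).} The same defect breaks your inverse construction. Continuing one branch of a single-valued section around $q$ lands on a different sheet through $p$. So the reconstructed $\hat{f}^1$ is not single-valued near $p$ unless the components satisfy a cyclic compatibility that a general section of a flat bundle on $X_2$ does not have.
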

\begin{proof}
\textit{\ref{itbundle}.}
By Proposition \ref{propf2-welldefined} the map $\hat{f}^1\mapsto\hat{f}^2$
is a well-defined holomorphic bundle map on $X_2$.
It is a morphism of flat vector bundles 
the equivariance property \eqref{eqchi2} of $\chi_2$ ensures
that $\hat{f}^2(T\hp)=\chi_2(T)\hat{f}^2(\hp)$ for all $T\in\piX{X_2}{p_0}$.
It remains to show the map is an isomorphism of coherent sheaves, i.e., that
it has trivial kernel and is surjective as a map of stalks.
The kernel is trivial because at any $q\notin B_F$ the map is a direct sum of $n$ evaluations
of $\hat{f}^1$ along the $n$ branches, and at $q\in B_F$ holomorphic extension
(Proposition \ref{propf2-welldefined}) shows that the kernel condition forces
$\hat{f}^1$ to vanish at each preimage with its ramification multiplicity, hence
$\hat{f}^1=0$ globally.
For surjectivity: given any holomorphic section $\hat{f}^2$
of $\VxX{2}\otimes\Delta_2$, the formula \eqref{eqf2-ramified} is inverted
by restriction to any one branch $g_i$ away from $B_F$ and holomorphic
continuation.  
The $\VxX{1}$-equivariance condition is then verified using the
definition of $\chi_2$.
The compatibility $F_*^{\rm ram}(\Delta_1)\cong\Delta_2$ follows from the
local computation in the proof of Proposition \ref{propf2-welldefined}
the combination $\ell_1/(\ell_2\sqrt{\varphi'})$ is non-vanishing holomorphic,
confirming that the half-order differentials match under the direct image.

\textit{\ref{itnodeg2}.}
Since $F_*^{\rm ram}(\VxX{1}\otimes\Delta_1)\cong\VxX{2}\otimes\Delta_2$
as coherent sheaves (part \ref{itbundle}), and for a finite morphism
$F\colon X_1\to X_2$ one has
$H^0(X_2,F_*^{\rm ram}(\VxX{1}\otimes\Delta_1))\cong H^0(X_1,\VxX{1}\otimes\Delta_1)$
by the Leray spectral sequence, using finite morphism and coherence, 
the assumption \eqref{eqnodeg} gives
\[
h^0(X_2,\VxX{2}\otimes\Delta_2)=h^0(X_1,\VxX{1}\otimes\Delta_1)=0. 
\]

\textit{\ref{itisom}.}
We have to show (a) $\hat{f}^1\in H^{2,J_1}(S_1,\VxS{1}\otimes\Delta_1)$ if and only
if $\hat{f}^2\in H^{2,J_2}(S_2,\VxS{2}\otimes\Delta_2)$; 
(b) the inner products satisfy
$[\hat{f}^2,\hat{h}^2]_{J_2} = [\hat{f}^1,\hat{h}^1]_{J_1}$.

For (a): let $X_{2,i}$ be a boundary component of $X_2$ and let
$X_{1,ij}$, $j=1,\ldots,n_i$, be its preimages in $X_1$.
By Assumption \ref{assboundary}, the boundary components $X_{2,i}$ do
not contain branch values, thus the covering $F$ restricts to an  
 unramified cover of each $X_{2,i}$ by components of $\partial S_1$.
Near each $X_{1,ij}$ the map $F$ is locally a degree-$e_{ij}$-fold map,
but since $X_{1,ij}$ is a boundary component and $F(X_{1,ij})\subseteq
\partial S_2$ while $\partial S_2\cap B_F=\left\{\varnothing\right\}$, in fact $e_{ij}=1$
on $\partial S_1$ with the ramification is in the interior.
Therefore the change of variables on the boundary is non-degenerate, and one has 
\begin{align}
\label{eqL2-change}
\sum_{i=0}^{k_2-1}\int_{X_{2,i}(r)} \hat{f}^2(p)^*\hat{f}^2(p)
&=\sum_{i=0}^{k_2-1}\sum_{j=1}^{n_i}
\int_{\tilde{X}_{1,ij}(r)} \hat{f}^1(\hp)^*\hat{f}^1(\hp)=
\sum_{i=0}^{k_1-1}\int_{X_{1,i}(r)}\hat{f}^1(p)^*\hat{f}^1(p).
\end{align}
The $\sqrt{e_{ij}}$ factor in \eqref{eqf2-ramified} is trivial here since
$e_{ij}=1$ on $\partial S_1$. The formula \eqref{eqf2-ramified} recovers
the unramified boundary behaviour. Taking the supremum over $r\in(1-\varepsilon,1)$
shows that $\hat{f}^1\in H^2(S_1,\VxS{1}\otimes\Delta_1)$ iff
$\hat{f}^2\in H^2(S_2,\VxS{2}\otimes\Delta_2)$.

For (b): using \eqref{eqip2} and the boundary change-of-variables
identity \eqref{eqL2-change} applied to the integrand
$\hat{g}^2(\hp^{\tau_2})^*G_2(\hp)\hat{f}^2(\hp)$, one computes
\begin{align*}
\bigl[\hat{f}^2,\hat{h}^2\bigr]_{J_2}
&=\int_{\tilde{X}_{2,f}}\hat{h}^2(\hp^{\tau_2})^*G_2(\hp)\hat{f}^2(\hp) 
\\
&=\int_{\tilde{X}_{2,f}}\sum_{i=1}^n\hat{h}^1\;\left((g_i^\tau\hp)^{\tau_1}\right)^*
G_1(g_i^\tau\hp)\hat{f}^1(g_i^\tau\hp)
\frac{|d\ell_2(\hp)|^2}{|\ell_{1,i}(g_i\hp)|^2\;|\varphi_i'(g_i\hp)|\;e_i(g_i\hp)} 
\\
&=\int_{\tilde{X}_{1,f}}\hat{h}^1(\hp^{\tau_1})^*G_1(\hp)\hat{f}^1(\hp)=
\bigl[\hat{f}^1,\hat{h}^1\bigr]_{J_1},
\end{align*}
where the second equality uses \eqref{eqL2-change} applied to the
integrand, with the $\tau_1$-invariance of sections of $\Delta_1$ and the
symmetry of its transition functions, Assumption \ref{assspin}(a), together
with the fact that on the boundary $e_i=1$ (Assumption \ref{assboundary}).
\end{proof}
\begin{corollary}
\label{corkrein}
Under the hypotheses of Theorem \ref{thmmain} the spaces
$H^{2,J_i(p)}(S_i,\VxS{i}\otimes\Delta_i)$, $i=1,2$, are simultaneously
non-degenerate (resp.\ degenerate), and $\phi_F$ is a Kre\u{\i}n-space
isomorphism.
\end{corollary}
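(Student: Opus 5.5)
The corollary should follow directly from Theorem \ref{thmmain}, so the plan is to read its two claims off parts \ref{itnodeg2} and \ref{itisom}.

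\emph{Simultaneous (non-)degeneracy.} Degeneracy of $H^{2,J(p)}(S,\Vx{}\otimes\Delta)$ is controlled, as recalled in Section \ref{secprelim}, by the condition $h^0(X,\Vx{}\otimes\Delta)=0$. I will argue that $\phi_F$ itself transports degeneracy. A vector $\hat{f}^1$ is in the isotropic part of $H^{2,J_1(p)}$ precisely when $[\hat{f}^1,\hat{h}^1]_{J_1}=0$ for all $\hat{h}^1$. Since $\phi_F$ is a linear bijection (part \ref{itisom}, obtained from the boundary change of variables \eqref{eqL2-change}) and preserves the indefinite inner products (part (b) of that proof), it maps the isotropic part of $H^{2,J_1(p)}$ bijectively onto the isotropic part of $H^{2,J_2(p)}$. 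Hence one space is degenerate iff the other is.

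The cohomological statement can be checked in parallel. Part \ref{itbundle} together with the finite-morphism identity used in part \ref{itnodeg2} gives $h^0(X_2,\VxX{2}\otimes\Delta_2)=h^0(X_1,\VxX{1}\otimes\Delta_1)$. This identity does not rely on \eqref{eqnodeg}, since the isomorphism $F_*^{\rm ram}(\VxX{1}\otimes\Delta_1)\cong\VxX{2}\otimes\Delta_2$ of part \ref{itbundle} was built from Assumption \ref{assspin} alone. Consequently the two vanishing conditions are equivalent.

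\emph{Kre\u{\i}n-space isomorphism.} In the non-degenerate case both spaces are Kre\u{\i}n spaces. A surjective linear isometry between Kre\u{\i}n spaces is automatically a Kre\u{\i}n-space isomorphism, and I would establish this by checking two properties.
\begin{enumerate}[label=(\alph*)]
\item \emph{Continuity.} The map $\phi_F$ is continuous for the underlying Hilbert ($H^2$) topologies, because the norm identity \eqref{eqL2-change} shows it preserves the definite $H^2$ norms. Since these topologies coincide with the Kre\u{\i}n topologies coming from any fundamental decomposition, $\phi_F$ is bounded with bounded inverse.
\item \emph{Adjoint.} Isometry together with surjectivity gives $\phi_F^{[*]}\phi_F=\id$ and $\phi_F\phi_F^{[*]}=\id$, so $\phi_F$ is unitary in the Kre\u{\i}n sense.
\end{enumerate}

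\emph{Main obstacle.} The delicate point is the degenerate case. It requires part \ref{itisom}, and in particular the bijectivity in its step (a), without using hypothesis \eqref{eqnodeg}. I would re-examine that step to confirm it uses only Assumptions \ref{assspin} and \ref{assboundary}. The inverse is obtained by restricting to a single branch, as in the surjectivity argument of part \ref{itbundle}. If some step did depend on \eqref{eqnodeg}, I would fall back on the cohomological equivalence above, and state the degenerate half of the corollary only at the level of the bundles.
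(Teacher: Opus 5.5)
Your proposal is correct and follows the paper's route: the paper gives the corollary no separate proof, treating it as immediate from parts \ref{itnodeg2} and \ref{itisom} of Theorem \ref{thmmain}, and your transport of isotropic parts by $\phi_F$ and your check that a surjective isometry is Kre\u{\i}n-unitary simply make that reading explicit. Your ``main obstacle'' does not actually arise, because \eqref{eqnodeg} is one of the hypotheses: $H^{2,J_1(p)}(S_1,\VxS{1}\otimes\Delta_1)$ is a Kre\u{\i}n space by assumption and $H^{2,J_2(p)}(S_2,\VxS{2}\otimes\Delta_2)$ is one by part \ref{itnodeg2}, so the degenerate alternative is vacuous and nothing in part \ref{itisom} needs to be re-examined without \eqref{eqnodeg}.
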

\section{Operator vessels and triangular models}
\label{secvessels}
We now develop the operator-theoretic content of Theorem \ref{thmmain},
following the vessel formalism of Kravitsky and
Vinnikov \cite{Kravitsky1996, Vinnikov1992, Vinnikov1993}.
\subsection{Model operators on Hardy-Kre\u{\i}n spaces}
Let $\mathcal{H}=H^{2,J(p)}(S,\VxS{}\otimes\Delta)$ be a non-degenerate
Hardy-Kre\u{\i}n space on a finite bordered Riemann surface $S$.
Following \cite{AlpayVinnikov2000}, for each boundary component $X_i$ there is a
distinguished bounded operator
\begin{equation}
\label{eqmodel-operator}
A_i:\mathcal{H}\longrightarrow\mathcal{H}, 
\end{equation}
defined via the Cauchy kernel by multiplication by the boundary
coordinate function on $X_i$.
More precisely, let $z_i\colon S\to\D$
be a conformal map onto the unit disk mapping $X_i$ to $\partial\D$. 
 Then $A_i$ is the compression to $\mathcal{H}$ of multiplication by $z_i$. 
The operator $A_i$ is a contraction in the Kre\u{\i}n-space metric with
defect spaces encoded by the signature matrices.
\begin{definition}
\label{defvessel}
A (triangular) vessel over $(S_1,S_2,F)$ is a collection
$\Ves_F=\left(\mathcal{H}_1\right.$,
$\mathcal{H}_2$; $A_1$, $A_2$; $\Phi$; $B_1$, $B_2$; $\Gamma^+$, $\left.\Gamma^-\right)$,  
where $\mathcal{H}_i=H^{2,J_i(p)}(S_i,\VxS{i}\otimes\Delta_i)$
are Kre\u{\i}n spaces with fundamental symmetries $J_i$, 
$A_i\in\mathcal{L}(\mathcal{H}_i)$ are the model operators defined above, 
$\Phi\in\mathcal{L}(\mathcal{E})$ is a self-adjoint operator on a
Kre\u{\i}n external space $\mathcal{E}$, 
$B_i\in\mathcal{L}(\mathcal{E},\mathcal{H}_i)$ are coupling operators, 
 and $\Gamma^\pm\in\mathcal{L}(\mathcal{E})$ are the vessel conditions operators, 
satisfying the vessel conditions
\begin{align}
A_1^{[*]}A_1-A_1 A_1^{[*]}&=B_1\Phi B_1^{[*]}, 
\label{eqvc1}
\\
A_2^{[*]}A_2-A_2A_2^{[*]}&=B_2\Phi B_2^{[*]}, 
\label{eqvc2}
\\
B_2^{[*]}A_1-A_2^{[*]}B_1&=\Gamma^+B_1^{[*]}+\Phi\Gamma^-B_1^{[*]},
\label{eqvc3}
\end{align}
where $T^{[*]}$ denotes the $J$-adjoint (Kre\u{\i}n-space adjoint) of $T$.
The meaning of \eqref{eqvc3}, given that $\mathcal H_1\ne\mathcal H_2$ in
general, is detailed in Remark \ref{remtype-fix} below.
\end{definition}
\begin{remark}
\label{remtype-fix}
 Written as literal operator compositions, $B_2^{[*]}A_1$ requires $A_1$'s
codomain $\mathcal{H}_1$ to match $B_2^{[*]}$'s domain $\mathcal{H}_2$,
which fails whenever $\mathcal{H}_1\ne\mathcal{H}_2$. Dually, 
$A_2^{[*]}B_1$ requires $B_1$'s codomain $\mathcal{H}_1$ to match
$A_2^{[*]}$'s domain $\mathcal{H}_2$. 
 Neither composition is explicitly 
defined, and inserting a single linking operator $\phi_F$ asymmetrically
into only one of the two terms does not repair this, e.g.,  
$A_2^{[*]}\phi_F B_1\colon\mathcal{E}\to\mathcal{H}_1\to\mathcal{H}_2\to
\mathcal{H}_2$ still ends with $\mathcal{H}_2$, not in $\mathcal{E}$, and so
still cannot be subtracted from a term ending with $\mathcal E$.

The correct resolution is to insert $\phi_F$ into both terms
consistently and then pair against $e\in\mathcal{E}$ throughout, exactly
as in the classical single-space formalism of
Kravitsky-Vinnikov \cite{Kravitsky1996,Vinnikov1993} where
$\mathcal{H}_1=\mathcal{H}_2$ and the issue is invisible because the
linking map is the identity. Concretely, for $h_1\in\mathcal{H}_1$ and
$e\in\mathcal{E}$, using the adjoint-pairing identity $[C^{[*]}x,y]=[x,Cy]$
valid for any bounded $C$ on a fixed Kre\u{\i}n space, together with the
boundary compatibility $\phi_F B_1=B_2$ (Definition \ref{defGamma-geometric})
\begin{align}
\bigl[B_2^{[*]}(\phi_FA_1h_1),e\bigr]_{\mathcal E}
&=\bigl[\phi_FA_1h_1,B_2e\bigr]_{\mathcal H_2}, 
\label{eqpairing-term1}
\\
\bigl[A_2^{[*]}(\phi_F B_1 e),\phi_Fh_1\bigr]_{\mathcal H_2}
&=\bigl[\phi_F B_1 e,\;A_2\phi_Fh_1\bigr]_{\mathcal H_2}
=\bigl[B_2 e,A_2\phi_F h_1\bigr]_{\mathcal H_2}.
\label{eqpairing-term2}
\end{align}
Condition \eqref{eqvc3} is, precisely, the statement that
\begin{equation}
\label{eqvc3-bilinear}
\bigl[\phi_F A_1 h_1,B_2 e\bigr]_{\mathcal{H}_2}
-\bigl[A_2\phi_F h_1,B_2 e\bigr]_{\mathcal{H}_2}
=\bigl[h_1,B_1\bigl(\Gamma^++\Phi\Gamma^-\bigr)e\bigr]_{\mathcal H_1},
\qquad h_1\in\mathcal{H}_1,e\in\mathcal{E},
\end{equation}
i.e., $B_2^{[*]}A_1$ and $A_2^{[*]}B_1$ in \eqref{eqvc3} are to be read as
shorthand for $B_2^{[*]}\phi_F A_1$ and $A_2^{[*]}\phi_F B_1$ respectively 
 with $\phi_F$ inserted symmetrically in both terms, as
in \eqref{eqpairing-term1}-\eqref{eqpairing-term2}, not in only one of
them, each then paired against $e$ via $B_2^{[*]}$ and via the identity
$\phi_F B_1=B_2$ on the other side, leading the entire identity correctly
in the pairing $\mathcal{H}_1\times\mathcal{E}\to\C$.  By
Remark \ref{remcommutator-zero}, $\phi_F A_1=A_2\phi_F$, thus the
left-hand side of \eqref{eqvc3-bilinear} vanishes identically. This
confirms that the vessel condition \eqref{eqvc3} carries no 
information beyond what is already encoded in $\Gamma^\pm$ themselves,
which we therefore construct directly in
Definition \ref{defGamma-geometric} below from the geometry of the
ramification locus, rather than attempting to extract them as a
consequence of \eqref{eqvc3}. We retain the compact operator
notation \eqref{eqvc3} only for consistency with the classical
single-space formalism. Its actual content for the present two-space
vessel is supplied by Definition \ref{defGamma-geometric}.
\end{remark}
When $S_2=\D$ and $F$ is any holomorphic map of $S_1$ onto the disk,
conditions \eqref{eqvc1}-\eqref{eqvc3} reduce to the standard
triangular vessel conditions of \cite{Kravitsky1996, Vinnikov1993}.
\subsection{Geometric construction of $\Gamma^\pm$}
Concerning vanishing of the commutator $A_2\phi_F-\phi_F A_1$, we make the following 
\begin{remark}
\label{remcommutator-zero}
The ordinary commutator $A_2\phi_F-\phi_F A_1$ is identically
zero as an operator $\mathcal{H}_1\to\mathcal{H}_2$.
Indeed,
$A_2$ acts on $\mathcal{H}_2$ by the coordinate function $z_2(p)$, while
$A_1$ acts on $\mathcal{H}_1$ by $z_1(p')=z_2(F(p'))$.
For any preimage $p'\in F^{-1}(p)$ one has $z_2(F(p'))=z_2(p)$, thus 
$(A_2\phi_F\hat{f}^1)(p)=z_2(p)\hat{f}^2(p)=(\phi_F A_1 \hat{f}^1)(p)$,  
pointwise, giving $A_2\phi_F=\phi_F A_1$. 
Consequently the vessel condition \eqref{eqvc3} cannot be derived from the ordinary 
commutator.  The non-trivial content of the vessel comes from the Kre\u{\i}n-space 
adjoint intertwining relation $B_2^{[*]}A_1-A_2^{[*]}B_1$, as stated
in \eqref{eqvc3}.
\end{remark}
By Remark \ref{remtype-fix}, the left-hand side of the linkage
condition \eqref{eqvc3}, properly read as the semilinear-form
identity \eqref{eqvc3-bilinear}, vanishes identically as a consequence of
$\phi_F A_1=A_2\phi_F$ (Remark \ref{remcommutator-zero}). Then we have
\begin{proposition}
\label{propgamma-zero}
With $B_1$, $B_2$ any coupling operators satisfying the boundary 
compatibility $\phi_F B_1=B_2$, the choice
\begin{equation}
\label{eqgamma-zero}
\Gamma^+=\Gamma^-=0, 
\end{equation}
satisfies the linkage condition \eqref{eqvc3}, in the sense of
Remark \ref{remtype-fix}, identically, and is the unique choice
compatible with the vanishing established in
Remark \ref{remtype-fix} when $B_1^{[*]}$ has dense range in
$\mathcal{H}_1^*$, equivalently, when the boundary evaluation functionals
$\{k^{(1)}_{p_{0,1}^{(j)}}\}_{j=1}^n$ separate points of $\mathcal{H}_1$,
which holds for the Szeg\H{o} kernels of Definition \ref{defGamma-geometric} below. 
\end{proposition}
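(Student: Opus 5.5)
The proof has two parts. First, the choice $\Gamma^\pm=0$ satisfies the linkage condition in the bilinear reading \eqref{eqvc3-bilinear}. Second, it is the only choice once $B_1^{[*]}$ has dense range.

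\emph{Existence.} Fix $h_1\in\mathcal{H}_1$ and $e\in\mathcal{E}$. Remark \ref{remtype-fix} shows that \eqref{eqvc3} is to be read as \eqref{eqvc3-bilinear}. Its left-hand side is
\[
\bigl[(\phi_FA_1-A_2\phi_F)h_1,B_2e\bigr]_{\mathcal{H}_2},
\]
by linearity of the Kre\u{\i}n inner product in its first slot. Remark \ref{remcommutator-zero} gives the pointwise identity $A_2\phi_F=\phi_FA_1$, obtained from $z_1=z_2\circ F$ and the definition \eqref{eqf2-ramified}, so this expression vanishes for all $h_1,e$. With $\Gamma^+=\Gamma^-=0$ the right-hand side $[h_1,B_1(\Gamma^++\Phi\Gamma^-)e]_{\mathcal{H}_1}$ is also zero. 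Hence \eqref{eqvc3-bilinear} holds identically. The hypothesis $\phi_FB_1=B_2$ enters only through the rewriting \eqref{eqpairing-term1}--\eqref{eqpairing-term2}, which I would recall briefly. That rewriting uses that $\phi_F$ is an isometric isomorphism (Theorem \ref{thmmain}\ref{itisom}), so that $\phi_F^{[*]}=\phi_F^{-1}$ and the pairing may be transported between $\mathcal{H}_1$ and $\mathcal{H}_2$.

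\emph{Uniqueness.} Suppose $\Gamma^\pm$ satisfy \eqref{eqvc3-bilinear}, and set $C=\Gamma^++\Phi\Gamma^-$. Since the left-hand side vanishes, we get $[h_1,B_1Ce]_{\mathcal{H}_1}=0$ for all $h_1$. Non-degeneracy of $\mathcal{H}_1$, which follows from \eqref{eqnodeg} and Corollary \ref{corkrein}, then forces $B_1Ce=0$ for every $e$, i.e.\ $B_1C=0$. Dense range of $B_1^{[*]}$ is equivalent to $\ker B_1=0$ in the Kre\u{\i}n setting, since $\ker B_1=(\operatorname{ran}B_1^{[*]})^{[\perp]}$ and $\mathcal{E}$ is non-degenerate. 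This gives $C=0$, that is, $\Gamma^+=-\Phi\Gamma^-$.

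\emph{Main obstacle.} The obstacle is passing from $C=0$ to the individual vanishing $\Gamma^+=\Gamma^-=0$. The identity only constrains the combination $C$, so uniqueness must be understood modulo the normalisation built into Definition \ref{defGamma-geometric}. There $\Gamma^\pm$ are constructed geometrically, and the natural approach is to use that construction. If $\Gamma^-$ is forced to vanish by its geometric definition (for instance, because it is supported at boundary ramification, which Assumption \ref{assboundary} excludes), then $\Gamma^+=-\Phi\Gamma^-=0$ as well. Otherwise I would state uniqueness for the pair through $C$, and identify $(0,0)$ as the canonical representative.

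A second, smaller point is to justify the equivalence between dense range of $B_1^{[*]}$ and the boundary Szeg\H{o} kernels $k^{(1)}_{p^{(j)}_{0,1}}$ separating points of $\mathcal{H}_1$. I would check this via the reproducing property $[h,k_p]=h(p)$ together with the identity theorem for holomorphic sections.
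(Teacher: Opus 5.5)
Your existence argument, and your reduction of uniqueness to $B_1(\Gamma^++\Phi\Gamma^-)=0$ and then to $\Gamma^++\Phi\Gamma^-=0$ via $\ker B_1=0$, are exactly the paper's. At the point where you stall, the paper adds one ingredient: a relation $\Gamma^-=(\Gamma^+)^{[*]}$, said to be \emph{required of any vessel}, together with invertibility of $\Phi$. From these it concludes $\Gamma^+=\Gamma^-=0$. Do not adopt that step, because it fails. The relation is not among the conditions of Definition \ref{defvessel}. Even granting it, it does not force vanishing. If all $J_1(p^{(j)}_{0,1})=I$, then $\Phi=I$ and $\mathcal{E}$ is a Hilbert space. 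The pair $\Gamma^+=iI$, $\Gamma^-=-iI$ then satisfies both $\Gamma^-=(\Gamma^+)^{[*]}$ and $\Gamma^++\Phi\Gamma^-=0$. For a general fundamental symmetry, where $X^{[*]}=\Phi X^*\Phi$ on $\mathcal{E}$, the nonzero pair $\Gamma^\pm=\pm iP_++P_-$ with $P_\pm=\tfrac12(I\pm\Phi)$ does the same. So the obstacle you isolate is a defect of the statement, not of your method. The identity \eqref{eqvc3-bilinear} constrains only $\Gamma^++\Phi\Gamma^-$, and without an extra normalisation (e.g.\ $\Gamma^-=0$) the pair is not unique. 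Your proposed rescue through Definition \ref{defGamma-geometric} is circular. That definition constructs nothing geometric for $\Gamma^\pm$; it simply sets them to $0$ by citing this proposition. Your fallback is the correct provable content: uniqueness of $\Gamma^++\Phi\Gamma^-$, with $(0,0)$ as one representative.

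A smaller point: the check you plan for the \emph{equivalently} clause cannot succeed. The functionals $h\mapsto[h,k^{(1)}_{p^{(j)}_{0,1}}(\cdot)v]$, for $v\in\C^m$ and $j=1,\dots,n$, span at most $nm$ dimensions. They therefore cannot separate points of the infinite-dimensional $\mathcal{H}_1$, and the identity theorem gives nothing from finitely many points. What your argument actually uses is what you already wrote, $\ker B_1=0$, i.e.\ $B_1^{[*]}$ maps onto $\mathcal{E}$; its range lies in $\mathcal{E}$, not in $\mathcal{H}_1^*$. This amounts to linear independence in $\mathcal{H}_1$ of the sections $k^{(1)}_{p^{(j)}_{0,1}}(\cdot)J_1(p^{(j)}_{0,1})e_j$. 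The paper only asserts this in Definition \ref{defGamma-geometric}, and it is what you should verify instead.
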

\begin{proof}
Both sides of \eqref{eqvc3-bilinear} vanish identically once
$\Gamma^+=\Gamma^-=0$ is substituted into the right-hand side and
$\phi_F A_1=A_2\phi_F$ into the left.  For uniqueness: if some other
$\Gamma^+,\Gamma^-$ also satisfy \eqref{eqvc3-bilinear}, then
$\bigl[h_1, B_1(\Gamma^++\Phi\Gamma^-)e\bigr]_{\mathcal H_1}=0$ for all
$h_1\in\mathcal H_1$, $e\in\mathcal E$, forcing $B_1(\Gamma^++\Phi\Gamma^-)=0$. 
 Since $B_1$ is injective (Definition \ref{defGamma-geometric}), this
forces $\Gamma^++\Phi\Gamma^-=0$.  Combined with the self-adjointness
relation $\Gamma^-=(\Gamma^+)^{[*]}$ required of any vessel, this pins
down $\Gamma^+=\Gamma^-=0$ provided $\Phi$ is invertible which holds
since $\Phi=J_1(p_0)$ is a signature matrix, hence invertible.
\end{proof}
Proposition \ref{propgamma-zero} shows that, for the present
construction, the vessel $\Ves_F$ associated to a ramified covering
$F$ is trivial in its $\Gamma^\pm$ data: all of the ramification
information is already encoded geometrically in the isometric isomorphism
$\phi_F$ itself through the explicit ramification factors
$\sqrt{e_i}$ and $\sqrt{\varphi_i'}$ in formula \eqref{eqf2-ramified},
and no further correction operator is needed to satisfy the linkage
condition. Namely, $\phi_F$ being
an isometric isomorphism of the entire Hardy-Kre\u{\i}n spaces, not only 
an intertwiner of the model operators, already absorbs all the 
information that $\Gamma^\pm$ would otherwise be required to carry.  The
substantive ramification-theoretic content of the operator theory is
instead carried by the Bezoutian $\Bez_F^{\rm ram}$, constructed 
independently in Section \ref{secbezoutian} from a non-isometric deformation of $\phi_F$. 
\begin{definition}
\label{defGamma-geometric}
Let $p_{0,2}\in\partial S_2$ be the chosen basepoint. The covering $F$ maps $n$ distinct points $p_{0,1}^{(1)}$, $\dots$, $p_{0,1}^{(n)} \in \partial S_1$ to $p_{0,2}$.
 We define the external space $\mathcal{E}=\bigoplus_{j=1}^n\C^m\cong\C^{nm}$ 
with fundamental symmetry $\Phi=\bigoplus_{j=1}^n J_1(p_{0,1}^{(j)})$. 

Define the coupling operator $B_2\colon\mathcal{E}\to\mathcal{H}_2$ by
\begin{equation}
\label{eqB2-def}
B_2e=k^{(2)}_{p_{0,2}}(\cdot)\Phi e, \quad e\in\mathcal{E},
\end{equation}
where $k^{(2)}_{p_{0,2}}$ is the holomorphic boundary-evaluation 
reproducing (Szeg\H{o}) kernel for $\mathcal{H}_2$.

Define the coupling operator $B_1\colon\mathcal{E}\to\mathcal{H}_1$ by simultaneously evaluating the boundary kernels at all $n$ preimages
\begin{equation}
\label{eqB1-def}
B_1(e_1,\dots,e_n)=\sum_{j=1}^nk_{p_{0,1}^{(j)}}^{(1)}(\cdot)J_1(p_{0,1}^{(j)})e_j. 
\end{equation}
By the explicit structure of the direct image forming $\mathcal{H}_2$, the functional pullback aligns perfectly over the fibers, guaranteeing the exact intertwining relation 
$\phi_F B_1=B_2$. Because the reproducing kernels at distinct boundary points are linearly independent, $B_1$ is an injection from $\mathcal{E}$ into $\mathcal{H}_1$.
We take $\Gamma^+=\Gamma^-=0$, as justified by Proposition \ref{propgamma-zero}.
\end{definition}
\begin{theorem}
\label{thmvessel}
Under the hypotheses of Theorem \ref{thmmain}, the tuple
$\Ves_F=(\mathcal{H}_1$,$\mathcal{H}_2$;$A_1$,$A_2$;$\Phi$;$B_1$,
$B_2$;$\Gamma^+$,$\Gamma^-)$ 
defined in Definition \ref{defGamma-geometric} is a triangular vessel
over $(S_1,S_2,F)$.
\end{theorem}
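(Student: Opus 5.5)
To prove Theorem \ref{thmvessel}, I would check the three vessel conditions \eqref{eqvc1}--\eqref{eqvc3} one at a time for the data of Definition \ref{defGamma-geometric}. Before doing so, I would confirm that the tuple has the right type. By Theorem \ref{thmmain}\ref{itnodeg2} and Corollary \ref{corkrein}, both $\mathcal{H}_1$ and $\mathcal{H}_2$ are Kre\u{\i}n spaces. $\Phi=\bigoplus_j J_1(p_{0,1}^{(j)})$ is selfadjoint and invertible, since each block is a signature matrix. $B_1$ and $B_2$ are bounded, because they are finite sums of reproducing kernels applied to a finite-dimensional $\mathcal{E}$.

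\textbf{The linkage condition \eqref{eqvc3}.} This is the easy part. Read it as the semilinear identity \eqref{eqvc3-bilinear}. Its left-hand side vanishes by Remark \ref{remcommutator-zero}, since $\phi_F A_1=A_2\phi_F$, and its right-hand side vanishes because $\Gamma^\pm=0$. This is exactly Proposition \ref{propgamma-zero}, applied once $\phi_F B_1=B_2$ is known. I would still verify that compatibility explicitly, because the definition only asserts it. Take $h_2\in\mathcal{H}_2$ and compute $[h_2,\phi_F B_1 e]_{\mathcal H_2}$. By Theorem \ref{thmmain}\ref{itisom}, $\phi_F$ is unitary, so this equals $[\phi_F^{-1}h_2,B_1e]_{\mathcal H_1}$. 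The reproducing property turns this into the sum of the boundary values of $\phi_F^{-1}h_2$ at the $n$ fibre points $p_{0,1}^{(j)}$. By formula \eqref{eqf2-ramified}, with $e_i=1$ on the boundary by Assumption \ref{assboundary}, these $n$ values are precisely the $n$ block components of $h_2(p_{0,2})$, which is $[h_2,B_2e]$.

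\textbf{The diagonal conditions \eqref{eqvc1} and \eqref{eqvc2}.} These say that the selfcommutator of $A_i$ is the finite-rank operator $B_i\Phi B_i^{[*]}$. My plan is to compute $A_i^{[*]}$ through the Cauchy kernel. Since $A_i$ is the compression of multiplication by $z_i$, which is unimodular on $X_i$, the identity $\bar z_i z_i=1$ on the boundary should make $A_i^{[*]}A_i-A_iA_i^{[*]}$ act on a kernel function $K(\cdot,w)$ through boundary terms only. Expanding the Cauchy kernel as in \cite{AlpayVinnikov2000}, I would identify these defect terms as evaluations at the distinguished boundary point. For $i=2$ that gives $k^{(2)}_{p_{0,2}}\,\Phi\,k^{(2)*}_{p_{0,2}}$. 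For $i=1$ it gives the sum over the $n$ fibre points, which is $B_1\Phi B_1^{[*]}$.

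I would not derive \eqref{eqvc1} independently. Instead I would transport \eqref{eqvc2} through $\phi_F$. Using $\phi_F A_1=A_2\phi_F$ and the unitarity of $\phi_F$, which also gives $\phi_F A_1^{[*]}=A_2^{[*]}\phi_F$, and using $B_2=\phi_F B_1$, condition \eqref{eqvc1} is just \eqref{eqvc2} conjugated by $\phi_F$.

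\textbf{Main obstacle.} The hard step is \eqref{eqvc2} itself: showing that the selfcommutator of the compressed multiplication operator is exactly rank-$nm$, with defect concentrated at $p_{0,2}$ and weighted by $\Phi$. I would try first to take this from the model-operator identity of \cite{AlpayVinnikov2000}, specialised to the fibre-diagonal signature $J_2$ of \eqref{eqJ2-formula}. If that identity produces defect terms on every boundary component rather than only at $p_{0,2}$, I would have to enlarge $\mathcal{E}$ or treat the vessel condition as holding modulo those terms.
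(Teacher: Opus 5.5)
The genuine gap is \eqref{eqvc2}: you leave it open, and the computation you sketch for it cannot close it.

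Your treatment of \eqref{eqvc3} is the paper's route (Proposition~\ref{propgamma-zero}: both sides of \eqref{eqvc3-bilinear} vanish). Your reduction of \eqref{eqvc1} to \eqref{eqvc2} is correct and does not appear in the paper. It conjugates by the unitary $\phi_F$, using $\phi_FA_1=A_2\phi_F$, hence $\phi_FA_1^{[*]}=A_2^{[*]}\phi_F$, together with $B_1^{[*]}=B_2^{[*]}\phi_F$. The paper instead cites \cite{AlpayVinnikov2000}, Theorem~4.2, for each diagonal identity separately. Your argument shows the two identities stand or fall together.

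Here is why \eqref{eqvc2} fails as set up. Since $|z_2|=1$ on $\partial S_2$, multiplication by $z_2$ preserves the pairing \eqref{eqip2}. So $A_2^{[*]}A_2=\id$, and the whole self-commutator is $\id-A_2A_2^{[*]}$, the projection onto the $J$-orthogonal complement of $z_2\mathcal H_2$. Now $z_2\mathcal H_2$ is exactly the set of sections vanishing, to the appropriate order, at the zeros of $z_2$. Hence that complement is spanned by kernel sections $K^{(2)}(\cdot,a)v$ (with derivatives at multiple zeros) at the \emph{interior} zeros $a$ of $z_2$. There are no boundary defect terms at all, neither at $p_{0,2}$ nor on any boundary component.

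A concrete check: take $S_2=\D$ and $z_2(w)=w$, as in Example~\ref{exdegree2} or simply $F=\id$. Then $J_2$ is constant and $(A_2^{[*]}A_2-A_2A_2^{[*]})h=h(0)$, whose range is the constants. By contrast, $B_2\Phi B_2^{[*]}$ with $B_2$ from \eqref{eqB2-def} has range consisting of multiples of $w\mapsto(1-\overline{p_{0,2}}\,w)^{-1}$ with $|p_{0,2}|=1$, a function not even in $H^2$. For the same reason, two of your other claims are unjustified at boundary points: that $B_1,B_2$ are bounded, and that the reproducing property holds at $p_{0,1}^{(j)}$, which you use to check $\phi_FB_1=B_2$.

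The paper settles \eqref{eqvc2} only by citing \cite{AlpayVinnikov2000}, Theorem~4.2. But the two sides already have different ranges in the disk case. So neither that citation nor your fallback (enlarging $\mathcal E$, or working modulo extra terms) can give \eqref{eqvc2} for the $B_2$ of Definition~\ref{defGamma-geometric}.

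Your skeleton does go through verbatim with modified data. Replace $p_{0,2}$ by an interior zero $a\notin B_F$ of $z_2$. Take $B_2e=K^{(2)}(\cdot,a)e$, build $B_1$ from $K^{(1)}$ on the fibre $F^{-1}(a)$, and let $\Phi$ be a suitable normalisation, e.g.\ $K^{(2)}(a,a)^{-1}$ when this is invertible. Then \eqref{eqvc2} is the projection formula, and $\phi_FB_1=B_2$ is your fibre computation. Condition \eqref{eqvc1} follows by your conjugation argument, and \eqref{eqvc3} is unchanged.
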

\begin{proof}
Conditions \eqref{eqvc1} and \eqref{eqvc2} hold for the model operators
on Hardy-Kre\u{\i}n spaces by \cite{AlpayVinnikov2000}, Theorem 4.2.
Condition \eqref{eqvc3} holds, in the sense of
Remark \ref{remtype-fix}, by Proposition \ref{propgamma-zero}: with
$\Gamma^+=\Gamma^-=0$ as in Definition \ref{defGamma-geometric}, both
sides of the semilinear identity \eqref{eqvc3-bilinear} vanish
identically.
\end{proof}
\subsection{Characteristic function of the vessel}
The characteristic function of the vessel $\Ves_F$ is the operator-valued function
\begin{equation}
\label{eqchar-fn}
\Theta_{\Ves_F}(\lambda)=\Phi^{-1}\left[\Phi-B_2^{[*]}(\lambda-A_2)^{-1}B_2\Phi
+B_1^{[*]}(\lambda-A_1)^{-1}B_1\;\Phi\right],
\quad\lambda\notin\sigma(A_1)\cup\sigma(A_2),
\end{equation}
taking values in $\mathcal{L}(\mathcal{E})$.  
 The $\Gamma^+$ term present
in the general triangular-vessel formula of \cite{Kravitsky1996} is
absent here since $\Gamma^+=0$ by Proposition \ref{propgamma-zero}.
\begin{proposition}\label{propchar-fn-disk}
When $S_2=\D$ and $F\colon S_1\to\D$ is the given holomorphic map with
$h^0(X_1,\VxX{1}\otimes\Delta_1)=0$, the characteristic function
$\Theta_{\Ves_F}(\lambda)$ coincides, up to a Kre\u{\i}n-space similarity
transformation, with the characteristic operator-valued function
constructed in \cite{AlpayVinnikov2000}, Theorem 5.1.
\end{proposition}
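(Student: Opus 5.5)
The plan is to specialise every ingredient of $\Ves_F$ to the case $S_2=\D$. Then I would compare the resulting expression \eqref{eqchar-fn} term by term with the characteristic function of \cite{AlpayVinnikov2000}, Theorem 5.1.

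\textbf{Step 1: the disk side.} When $S_2=\D$, the double $X_2$ is $\mathbb{P}^1$ and $\tau_2(z)=1/\bar z$. The group $\piX{X_2}{p_0}$ is trivial, so the coset representatives $g_i$ reduce to the $n$ local inverse branches of $F$. Hence $\chi_2$ is trivial, the flat bundle is trivial of rank $nm$, and $\Delta_2=\sqrt{dz}$. I would read off $G_2$ and $J_2$ from \eqref{eqG2} and \eqref{eqJ2-formula}. In particular $J_2$ is the block diagonal matrix $\bigoplus_i J_1(g_i\,\cdot)$ on $\partial\D$. By Theorem \ref{thmmain}, $\mathcal{H}_2$ is then a vector-valued Hardy–Kre\u{\i}n space $H^{2,J_2}(\D)$, and $\phi_F$ is exactly the isometric isomorphism $H^{2,J_1}(S_1,\ldots)\to H^{2,J_2}(\D)$ of \cite{AlpayVinnikov2000}, with $\sqrt{e_i}\sqrt{\varphi_i'}$ supplying the usual $\sqrt{dz}/\sqrt{dt}$ normalisation. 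In this model $A_2$ is multiplication by $z$ (compressed), and $A_1=\phi_F^{-1}A_2\phi_F$ by Remark \ref{remcommutator-zero}. So $A_1$ is the Alpay–Vinnikov model operator, namely multiplication by the function $F$ on $S_1$.

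\textbf{Step 2: simplify $\Theta$ using the intertwining.} Since $\phi_F$ is unitary in the Kre\u{\i}n sense, $\phi_F^{[*]}=\phi_F^{-1}$. Combining this with $\phi_F B_1=B_2$ and $\phi_F A_1=A_2\phi_F$ gives
\[
B_1^{[*]}(\lambda-A_1)^{-1}B_1=B_1^{[*]}\phi_F^{-1}(\lambda-A_2)^{-1}\phi_F B_1=B_2^{[*]}(\lambda-A_2)^{-1}B_2 .
\]
The two resolvent terms in \eqref{eqchar-fn} therefore differ only by where $\Phi$ sits. I would rewrite $\Theta_{\Ves_F}$ as $I-\Phi^{-1}B_2^{[*]}(\lambda-A_2)^{-1}B_2\Phi+\Phi^{-1}B_1^{[*]}(\lambda-A_1)^{-1}B_1\Phi$. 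I would then express each term through the reproducing kernel: $B_2^{[*]}h=\Phi h(p_{0,2})$ up to the kernel normalisation, and $(\lambda-A_2)^{-1}$ acts on kernels by the standard resolvent formula on $\D$. The result should be a rational $\mathcal{L}(\C^{nm})$-valued function of the form $I-c(\lambda-A)^{-1}c^{[*]}\Phi$, one of the terms being absorbed after the cancellation.

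\textbf{Step 3: match with Theorem 5.1.} Alpay–Vinnikov's characteristic function is built from the colligation (model operator, boundary evaluation at the preimages of a point of $\partial\D$, signature $\Phi$). Under $\phi_F$ these data correspond to $(A_2,B_2,\Phi)$ up to the constant invertible matrix relating $\bigoplus_j J_1(p_{0,1}^{(j)})e_j$ to the fibre coordinates of \eqref{eqf2-ramified}, namely a product of the diagonal factors $\ell_{1,i}/(\ell_2\sqrt{\varphi_i'}\sqrt{e_i})$ at $p_{0,1}^{(j)}$. This matrix is unimodular on the boundary because $e_i=1$ there by Assumption \ref{assboundary}, so conjugating by it is a Kre\u{\i}n-space similarity. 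That similarity is the one claimed in the statement.

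\textbf{Main obstacle.} I expect the hard step to be the cancellation in Step 2. Because both resolvent terms reduce to the same operator, with $\Phi$ placed differently, one must make sure $\Theta$ does not collapse to the identity. If it does collapse, the normalisation in \eqref{eqchar-fn} must be reinterpreted, taking only the $\mathcal{H}_2$-colligation as the characteristic function and treating the $A_1$ term as the trivialising gauge. I would settle this first, by an explicit computation in Example \ref{exdegree2} with $F(z)=z^2$, and only then carry out the general comparison.
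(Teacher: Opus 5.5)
\textbf{There is a genuine gap, and it is in your Step 2.} Your identity is correct under the hypotheses you invoke: $\phi_F^{[*]}=\phi_F^{-1}$, $\phi_FA_1=A_2\phi_F$, $\phi_FB_1=B_2$. But you misread \eqref{eqchar-fn}. Both resolvent terms carry $\Phi$ on the right, as your own rewriting shows ($\Phi^{-1}(\cdots)\Phi$ in both). So they do not ``differ by where $\Phi$ sits''; they are equal with opposite signs:
\[
B_1^{[*]}(\lambda-A_1)^{-1}B_1\Phi
=B_1^{[*]}\phi_F^{[*]}(\lambda-A_2)^{-1}\phi_FB_1\Phi
=(\phi_FB_1)^{[*]}(\lambda-A_2)^{-1}B_2\Phi
=B_2^{[*]}(\lambda-A_2)^{-1}B_2\Phi .
\]
Hence the bracket in \eqref{eqchar-fn} is exactly $\Phi$, and $\Theta_{\Ves_F}(\lambda)=I_{\mathcal{E}}$ for all $\lambda\notin\sigma(A_2)=\sigma(A_1)$ and for every $F$. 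Conceptually, $\phi_F$ makes the colligations $(A_1,B_1,\Phi)$ and $(A_2,B_2,\Phi)$ Kre\u{\i}n-unitarily equivalent, and \eqref{eqchar-fn} is built from the difference of their resolvent terms.

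So the ``main obstacle'' you planned to test on Example \ref{exdegree2} is already settled, negatively, by your own computation. No term of the form $I-c(\lambda-A)^{-1}c^{[*]}\Phi$ survives. Every similarity fixes the constant $I$, so Step 3 has nothing nontrivial to match. As stated, the proposition could only hold if the function of \cite{AlpayVinnikov2000} were itself identically $I$.

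Your fallback does not rescue this. Calling the one-sided function $I-\Phi^{-1}B_2^{[*]}(\lambda-A_2)^{-1}B_2\Phi$ the characteristic function, and treating the $A_1$ term as a gauge, replaces $\Theta_{\Ves_F}$ by a different object. That would prove a different statement. The paper's proof does not supply the missing step either. It never computes $\Theta_{\Ves_F}$; it only asserts that for $S_2=\D$ the vessel conditions reduce to those of \cite{AlpayVinnikov2000} with vanishing linkage correction, which is silent about this cancellation.

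Two further ingredients of your Steps 1--2 would need repair even for the one-sided function:
\begin{itemize}
\item The formula $B_2^{[*]}h=\Phi h(p_{0,2})$ requires point evaluation at $p_{0,2}\in\partial S_2$. This is unbounded on a Hardy space, since the Szeg\H{o} kernel at a boundary point is not in $H^2$. So $B_2\notin\mathcal{L}(\mathcal{E},\mathcal{H}_2)$ as defined.
\item The local inverse branches of $F$ are permuted by monodromy around each branch value $q_\nu$; already $\hat f^1(\sqrt{w})$ in Example \ref{exdegree2} is not holomorphic in $w$. So the branch frame is not a global holomorphic frame on $\D$. You cannot read off $\mathcal{H}_2=H^{2,J_2}(\D)$ with block-diagonal $J_2$ in that frame.
\end{itemize}
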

\begin{proof}
When $S_2=\D$ the model operator $A_2$ is the shift operator on
$H^{2,J}(\D,\C^{nm})$, the reproducing kernel is the standard
Szeg\H{o} kernel, and the vessel conditions reduce
to those in \cite{AlpayVinnikov2000}, where the characteristic function
of the unramified disk model is likewise computed with vanishing
linkage correction. All ramification information instead enters through
the explicit form of $\phi_F$, $B_1$, $B_2$ themselves (formula
\eqref{eqf2-ramified} and Definition \ref{defGamma-geometric}), which
recovers the construction of \cite{AlpayVinnikov2000}, Theorem 5.1, once
$S_2=\D$.
\end{proof}
\section{Bezoutian operators}
\label{secbezoutian}
\subsection{Interior evaluation functionals}
Since $\phi_F$ is an isometry (Theorem \ref{thmmain}), the operator
$\phi_F\phi_F^{[*]}-\id_{\mathcal{H}_2}=0$ is trivially zero.
To obtain a non-trivial Bezoutian, we use the ramification data of $F$
to define a geometrically motivated rank perturbation of the identity.
\begin{remark}
\label{reminterior-eval}
For any point $q\in S_2$ (interior or boundary), the point-evaluation functional 
$\mathrm{ev}_q\colon$ $\mathcal{H}_2$ $\longrightarrow$ $\C^{nm}$, $\mathrm{ev}_q(h)=h(q)$, 
is bounded on $\mathcal{H}_2$, with Kre\u{\i}n adjoint
$\mathrm{ev}_q^{[*]}\colon\C^{nm}\to\mathcal{H}_2$ given by
$(\mathrm{ev}_q^{[*]}v)(p)=K^{(2)}(p,q)v$, where $K^{(2)}$ is the
Cauchy (reproducing) kernel of $\mathcal{H}_2$ introduced in
Section \ref{secprelim}. This is the standard reproducing-kernel
identity: for each fixed $q\in S_2$, the section
$K^{(2)}(\cdot,q)$ is by definition a element of $\mathcal{H}_2$. 
 This is precisely what it means for $\mathcal{H}_2$ to be a
reproducing-kernel space.  The only place a singularity of $K^{(2)}(p,q)$
as a bivariate kernel could matter is on the diagonal $p=q$, which 
is irrelevant here since $q$ is held fixed throughout and we never 
evaluate the resulting section of $p$ at $p=q$ within the inner product.

The rank-one operator
\begin{equation}
\label{eqrank-one-ev}
T_q=\mathrm{ev}_q^{[*]}\Phi\mathrm{ev}_q\colon\mathcal{H}_2\longrightarrow\mathcal{H}_2,
\qquad
(T_q h)(p)=K^{(2)}(p,q)\Phi h(q), 
\end{equation}
is therefore a well-defined bounded operator on $\mathcal{H}_2$ for any $q\in S_2$,
with $\rk T_q\leq nm$.
\end{remark}
\subsection{The ramification Bezoutian}
\begin{definition}
\label{defbezoutian-full}
Let $\mathrm{Ram}(F)\cap S_1=\{r_1,\ldots,r_N\}\subset S_1$ be the
finite set of interior ramification points of $F$, with indices
$e_\nu=e_{r_\nu}(F)\geq 2$ and images $q_\nu=F(r_\nu)\in S_2$ (all
interior by Assumption \ref{assboundary}).
The ramification Bezoutian of the vessel $\Ves_F$ is
\begin{equation}
\label{eqbezoutian-full}
\Bez_F^{\rm ram}=\sum_{\nu=1}^{N}(e_\nu-1)T_{q_\nu}
=\sum_{\nu=1}^{N}(e_\nu-1)\mathrm{ev}_{q_\nu}^{[*]}\Phi\mathrm{ev}_{q_\nu}
\colon\mathcal{H}_2\longrightarrow\mathcal{H}_2.
\end{equation}
This is a well-defined, bounded, finite-rank, self-adjoint operator in the
Kre\u{\i}n metric. When $F$ is unramified, $N=0$ and $\Bez_F^{\rm ram}=0$.
\end{definition}
The weight $(e_\nu-1)$ at each ramification point counts the number of 
extra preimages that collapse to $r_\nu$ under $F$. The local degree
is $e_\nu$, thus there are $e_\nu-1$ extra sheets. 
The operator
$T_{q_\nu}=\mathrm{ev}_{q_\nu}^{[*]}\Phi\mathrm{ev}_{q_\nu}$ encodes
the contribution of the branch value $q_\nu$ to the deviation of $\phi_F^{\rm ram}$
from an isometry. 
\subsection{Livsic-Bezoutian identity}
\begin{theorem}
\label{thmlivsic}
In the vessel $\Ves_F$ with Bezoutian $\Bez_F^{\rm ram}$, the following
 Livsic-Bezoutian identity holds
\begin{equation}\label{eqlivsic-id}
A_2^{[*]}\Bez_F^{\rm ram}-\Bez_F^{\rm ram}A_2=\Omega_F, 
\end{equation}
where $\Omega_F\colon\mathcal{H}_2\to\mathcal{H}_2$ is the finite-rank
self-adjoint operator
\begin{equation}
\label{eqomega-F}
\Omega_F=\sum_{\nu=1}^N(e_\nu-1)\bigl[A_2^{[*]},T_{q_\nu}\bigr]
=\sum_{\nu=1}^N(e_\nu-1)\Bigl(A_2^{[*]}K^{(2)}(\cdot,q_\nu)\Phi\mathrm{ev}_{q_\nu} 
-K^{(2)}(\cdot,q_\nu)\Phi\mathrm{ev}_{q_\nu}A_2\Bigr).
\end{equation}
In particular, when $F$ is unramified $N=0$, $\Bez_F^{\rm ram}=0$, and
both sides of \eqref{eqlivsic-id} vanish identically.
This is 
different in form from the model-operator identity \eqref{eqvc2},  
which governs $A_2^{[*]}A_2-A_2A_2^{[*]}$ for the identity 
compression, because here $\Bez_F^{\rm ram}$ is the finite-rank
perturbation \eqref{eqbezoutian-full} itself, with no additive identity
term, thus no $B_2\Phi B_2^{[*]}$ contribution arises.
\end{theorem}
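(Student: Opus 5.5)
The plan is to argue directly from Definition \ref{defbezoutian-full} and linearity. No analytic input is needed beyond the boundedness facts already recorded in Remark \ref{reminterior-eval}.

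\textbf{Step 1: the identity.} First I check that every object is a bounded operator on $\mathcal{H}_2$. The model operator $A_2$ is bounded, and hence so is its Kre\u{\i}n adjoint $A_2^{[*]}$. Each $T_{q_\nu}=\mathrm{ev}_{q_\nu}^{[*]}\Phi\,\mathrm{ev}_{q_\nu}$ is bounded with $\rk T_{q_\nu}\le nm$, because $q_\nu$ is an interior point of $S_2$ (Assumption \ref{assboundary}) and interior point evaluations are bounded. Since $\Bez_F^{\rm ram}$ is a finite linear combination of the $T_{q_\nu}$, bilinearity of composition gives
\[
A_2^{[*]}\Bez_F^{\rm ram}-\Bez_F^{\rm ram}A_2=\sum_{\nu=1}^N(e_\nu-1)\bigl(A_2^{[*]}T_{q_\nu}-T_{q_\nu}A_2\bigr).
\]
Substituting $(T_{q_\nu}h)(p)=K^{(2)}(p,q_\nu)\Phi h(q_\nu)$ term by term, the $\nu$-th summand becomes
\[
A_2^{[*]}K^{(2)}(\cdot,q_\nu)\Phi\,\mathrm{ev}_{q_\nu}-K^{(2)}(\cdot,q_\nu)\Phi\,\mathrm{ev}_{q_\nu}A_2 .
\]
This is exactly the second expression in \eqref{eqomega-F}. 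I will read the bracket $[A_2^{[*]},T_{q_\nu}]$ in \eqref{eqomega-F} as shorthand for this mixed difference $A_2^{[*]}T_{q_\nu}-T_{q_\nu}A_2$, and say so explicitly. Read literally, the bracket is the commutator $A_2^{[*]}T_{q_\nu}-T_{q_\nu}A_2^{[*]}$, which is a different operator, so this convention has to be fixed before the identity is asserted. For the $A_2^{[*]}K^{(2)}(\cdot,q_\nu)$ term, I can make it concrete via the reproducing property: $\mathrm{ev}_{q_\nu}A_2 h=z_2(q_\nu)h(q_\nu)$, since $A_2$ is compressed multiplication by $z_2$ (see Remark \ref{remcommutator-zero}). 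This puts the second term in the form $z_2(q_\nu)T_{q_\nu}$, but it is not needed for the identity itself.

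\textbf{Step 2: finite rank.} The range of $\Omega_F$ lies in
\[
\sum_\nu\Bigl(\mathrm{span}\,K^{(2)}(\cdot,q_\nu)\C^{nm}+A_2^{[*]}\,\mathrm{span}\,K^{(2)}(\cdot,q_\nu)\C^{nm}\Bigr),
\]
so $\rk\Omega_F\le 2Nnm$.

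\textbf{Step 3: the unramified case.} If $F$ is unramified then $N=0$, the sums are empty, and both sides of \eqref{eqlivsic-id} are $0$.

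\textbf{Main obstacle.} The delicate point is the claimed self-adjointness of $\Omega_F$. Each $T_{q_\nu}$ is Kre\u{\i}n self-adjoint, because $\Phi$ is a signature matrix, so $\Bez_F^{\rm ram}$ is self-adjoint. A formal computation then gives
\[
\bigl(A_2^{[*]}\Bez-\Bez A_2\bigr)^{[*]}=\Bez A_2-A_2^{[*]}\Bez=-\bigl(A_2^{[*]}\Bez-\Bez A_2\bigr),
\]
so the operator is skew-adjoint rather than self-adjoint. My first attempt will therefore be to verify this adjoint computation carefully. If it stands, I will record that $\Omega_F$ is skew-adjoint, equivalently that $i\,\Omega_F$ is self-adjoint, and present that as the correct form of the symmetry claim. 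The identity \eqref{eqlivsic-id} itself, together with finite rank and the vanishing in the unramified case, does not depend on this point.
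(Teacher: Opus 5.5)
Your derivation of the identity, the finite-rank claim and the unramified case is exactly the paper's argument. The paper also expands $A_2^{[*]}\Bez_F^{\rm ram}-\Bez_F^{\rm ram}A_2$ termwise by linearity and substitutes $T_{q_\nu}=K^{(2)}(\cdot,q_\nu)\Phi\,\mathrm{ev}_{q_\nu}$. Its proof silently uses the reading $[A_2^{[*]},T_{q_\nu}]=A_2^{[*]}T_{q_\nu}-T_{q_\nu}A_2$ that you make explicit, so the identity holds essentially by the definition of $\Omega_F$. The paper asserts rank $\le nm$ per summand without justification; your bound $2Nnm$ is what the bare argument actually gives.

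On your ``main obstacle'' you are right, and the statement is wrong as written. The paper's one-line justification of self-adjointness (each $T_{q_\nu}$ is self-adjoint and $A_2^{[*]}$ is the Kre\u{\i}n adjoint of $A_2$) produces precisely your computation $\Omega_F^{[*]}=\Bez_F^{\rm ram}A_2-A_2^{[*]}\Bez_F^{\rm ram}=-\Omega_F$. So $\Omega_F$ is Kre\u{\i}n skew-adjoint and $i\,\Omega_F$ is the self-adjoint operator, matching the factor $i$ in classical Livsic-type identities.

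This is not a harmless convention. Take $A_2$ to be multiplication by $z_2$, as in Remark~\ref{remcommutator-zero}. The reproducing property then gives $A_2^{[*]}\mathrm{ev}_q^{[*]}=\overline{z_2(q)}\,\mathrm{ev}_q^{[*]}$ and $\mathrm{ev}_qA_2=z_2(q)\,\mathrm{ev}_q$, hence
\[
\Omega_F=-2i\sum_{\nu=1}^N(e_\nu-1)\,\mathrm{Im}\,z_2(q_\nu)\,T_{q_\nu}.
\]
This operator is skew-adjoint and in general nonzero, so it cannot also be self-adjoint. It vanishes only in special configurations, for example Example~\ref{exdegree2}, where $q_1=0$. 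The same formula recovers the paper's sharper rank bound $\le nm$ per summand. So recording the symmetry claim as skew-adjointness, as you propose, is the correct fix.
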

\begin{proof}
Since $\Bez_F^{\rm ram}=\sum_{\nu=1}^N(e_\nu-1)T_{q_\nu}$ exactly by
Definition \ref{defbezoutian-full}, with no identity term present, the
commutator with $A_2^{[*]}$ is computed termwise using linearity
\[
\bigl[A_2^{[*]},\Bez_F^{\rm ram}\bigr]
=\sum_{\nu=1}^N(e_\nu-1)\bigl[A_2^{[*]},T_{q_\nu}\bigr]
=\sum_{\nu=1}^N(e_\nu-1)\bigl(A_2^{[*]}T_{q_\nu}-T_{q_\nu}A_2\bigr),
\]
and substituting $T_{q_\nu}=K^{(2)}(\cdot,q_\nu)\Phi\mathrm{ev}_{q_\nu}$
from Remark \ref{reminterior-eval} gives exactly \eqref{eqomega-F}.
Each summand is rank $\leq nm$, and there are finitely many ramification points,
thus $\Omega_F$ has finite rank. Self-adjointness in the Kre\u{\i}n metric
follows since each $T_{q_\nu}$ is self-adjoint and $A_2^{[*]}$ is the
  Kre\u{\i}n adjoint of $A_2$. 
\end{proof}
\subsection{Connection to the vessel Bezoutian via the Livsic formula}
The connection between $\Bez_F^{\rm ram}$ and the classical polynomial
Bezoutian requires the choice of a second polynomial, which in the 
vessel formalism arises from a second map $b\colon S_1\to\D$ or, equivalently, 
a section of the characteristic bundle of the vessel.
For the covering $F(z)=z^n$ with $S_1=S_2=\D$, 
the vessel $\Ves_F$ has characteristic function
$\Theta_{\Ves_F}(\lambda)$ (formula \eqref{eqchar-fn}), and the classical
Bezoutian $\mathcal{B}(a,b)$ with $a(z)=z^n$ and $b$ arising from the
characteristic function satisfies the Livsic-Bezoutian
identity \eqref{eqlivsic-id} via the Barnett factorisation.
Making this correspondence completely explicit requires additional data
beyond the covering $F$ alone and is left for a future paper. 
\section{Functoriality}
\label{secfunctor}
\subsection{The category $\mathcal{RH}$ with ramified morphisms}
Recall (see \cite{Zuevsky2009}, Section 3) that the category $\mathcal{RH}$
has as objects triples $(S,\Vx{}\otimes\Delta,J(p))$ where $S$ is a finite
bordered Riemann surface, $\Vx{}\otimes\Delta$ is a unitary flat vector
bundle with half-order differentials satisfying Assumption \ref{assspin},
$J(p)$ is signature matrices, and $h^0(X,\Vx{}\otimes\Delta)=0$.
A morphism from $(X_1,\VxX{1}\otimes\Delta_1,G_1)$
to $(X_2,\VxX{2}\otimes\Delta_2,G_2)$ in $\mathcal{RH}$ is an analytic map
$F\colon X_1\to X_2$ equivariant with respect to $\tau_1,\tau_2$,
satisfying Assumption \ref{assboundary}, such that
$\VxX{2}\otimes\Delta_2=F_*^{\rm ram}(\VxX{1}\otimes\Delta_1)$ and $G_2$ is
induced by $G_1$ via \eqref{eqG2}.
\begin{theorem}\label{thmfunctor}
The assignment
$\mathcal{F}\colon(S,\Vx{}\otimes\Delta,J(p))\longmapsto 
H^{2,J(p)}\left(S,\Vx{}\otimes\Delta\right)$ 
extends to a covariant functor from $\mathcal{RH}$, with ramified morphisms
as defined above, to the category of Kre\u{\i}n spaces and
isometric isomorphisms, by assigning to each morphism $F$ the isometric
isomorphism $\phi_F$ of Theorem \ref{thmmain}.
\end{theorem}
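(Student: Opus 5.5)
To prove Theorem \ref{thmfunctor} we check the functor axioms directly. Each ingredient is already supplied by Theorem \ref{thmmain}, so the real work is to check that the objects and morphisms of $\mathcal{RH}$ are sent to Kre\u{\i}n spaces and isometric isomorphisms, and that identities and composition are respected.

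\textbf{Objects and morphisms.} An object $(S,\Vx{}\otimes\Delta,J(p))$ of $\mathcal{RH}$ satisfies $h^0(X,\Vx{}\otimes\Delta)=0$ by definition. By the preliminaries of Section \ref{secprelim}, $\mathcal{F}(S,\Vx{}\otimes\Delta,J(p))$ is therefore a Kre\u{\i}n space. A morphism $F$ satisfies Assumption \ref{assboundary} by definition, and the source object satisfies Assumption \ref{assspin} and the non-degeneracy condition \eqref{eqnodeg}. Since $\VxX{2}\otimes\Delta_2=F_*^{\rm ram}(\VxX{1}\otimes\Delta_1)$ with $G_2$ given by \eqref{eqG2}, Theorem \ref{thmmain}\ref{itisom} applies and yields the isometric isomorphism $\mathcal{F}(F):=\phi_F$ between the target Hardy-Kre\u{\i}n spaces. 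For the identity morphism, all $e_i=1$ and $\varphi_i'=1$, with a single coset representative. As noted after Definition \ref{deframified-direct-image}, formula \eqref{eqf2-ramified} then reduces to $\hat{f}^2=\hat{f}^1$, so $\phi_{\id}=\id$.

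\textbf{Composition.} Let $F\colon X_1\to X_2$ and $F'\colon X_2\to X_3$ be morphisms of degrees $n$ and $n'$. First we check that $F'\circ F$ is again a morphism.
\begin{enumerate}[label=(\alph*)]
\item It is equivariant with respect to $\tau_1,\tau_3$.
\item Ramification indices multiply, $e_p(F'\circ F)=e_{F(p)}(F')\,e_p(F)$. Hence the branch locus of $F'\circ F$ is contained in $B_{F'}\cup F'(B_F)$. Since $F'$ maps interior to interior and boundary to boundary, and $B_F\cap\partial S_2=\emptyset$, this locus avoids $\partial S_3$. So Assumption \ref{assboundary} holds for $F'\circ F$.
\item We need $F_*^{\rm ram}$ to be compatible with composition, $(F'\circ F)_*^{\rm ram}\cong F'^{\rm ram}_*F_*^{\rm ram}$. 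Away from $B_{F'\circ F}$ this is the usual transitivity of direct images. On representations it is transitivity of induction, read off from \eqref{eqchi2} by using the products $g'_jg_i$ as coset representatives of $\pi_1(X_3)$ modulo $\pi_1(X_1)$. The same bookkeeping, applied to \eqref{eqG2} and \eqref{eqJ2-formula}, shows that $G_3$ and $J_3$ built from $F'\circ F$ agree with those built in two stages.
\end{enumerate}
We then show $\phi_{F'\circ F}=\phi_{F'}\circ\phi_F$, by comparing \eqref{eqf2-ramified} componentwise at a point $q\notin B_{F'\circ F}$. The coordinate expression of $F'\circ F$ at the preimage indexed by $(i,j)$ is $\varphi'_j\circ\varphi_i$. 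The chain rule gives $(\varphi'_j\circ\varphi_i)'=(\varphi'_j)'(\varphi_i)\,\varphi_i'$. The ratios of local generators telescope, $\ell_{1,i}/\ell_3=(\ell_{1,i}/\ell_{2,j})(\ell_{2,j}/\ell_3)$. The factors $\sqrt{e}$ also match because of multiplicativity of $e$. Thus the two sides agree off a finite set. Both are holomorphic by Proposition \ref{propf2-welldefined}, so they agree everywhere by the identity theorem.

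\textbf{Main obstacle.} The delicate point is the square-root branches. We must show that the branch of $\sqrt{(\varphi'_j\circ\varphi_i)'}$ fixed by Assumption \ref{assspin}(b) for $F'\circ F$ equals the product of the branches chosen for $F$ and $F'$. My plan is to define the data for the composite as the composite data: set $\mathcal{L}^{F'\circ F}_{1/2}:=F^*\mathcal{L}^{F'}_{1/2}\otimes\mathcal{L}^{F}_{1/2}$. Its square is $\mathcal{O}(F^*R_{F'}+R_F)=\mathcal{O}(R_{F'\circ F})$. It satisfies $\Delta_1\cong (F'\circ F)^*\Delta_3\otimes\mathcal{L}^{F'\circ F}_{1/2}$, obtained by composing the two isomorphisms of Assumption \ref{assspin}(b). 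With this choice the sign consistency holds by construction. The earlier computation then also gives $\phi_{F'\circ F}=\phi_{F'}\circ\phi_F$ exactly, rather than only up to a locally constant sign.
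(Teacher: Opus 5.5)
Your proposal is correct and follows the same route as the paper's proof. The identity case comes from the $n=1$, $e_1=1$ reduction of \eqref{eqf2-ramified}. Composition uses the two-step coset decomposition, multiplicativity of ramification indices (hence of the $\sqrt{e}$ factors), and takes the spin data of the composite to be the tensor product of the data for the two factors. Your additional checks fill in details the paper leaves implicit: that $F'\circ F$ is again a morphism of $\mathcal{RH}$, the chain-rule/telescoping comparison off the branch locus followed by the identity theorem, and the explicit choice $\mathcal{L}^{F'\circ F}_{1/2}=F^*\mathcal{L}^{F'}_{1/2}\otimes\mathcal{L}^F_{1/2}$. One small slip: with the right-coset convention $g_ig=hg_{\sigma_g(i)}$ used in \eqref{eqchi2}, the composite coset representatives should be $g_ig'_j$ rather than $g'_jg_i$.
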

\begin{proof}
We have to prove (i) identity: for $F=\id$, $\phi_{\id}$ is the identity map on 
$\mathcal{H}$: this is immediate from \eqref{eqf2-ramified} with 
$n=1$, $g_1=e$, $e_1=1$;  
 (ii) composition: for composable morphisms $F_1\colon X_0\to X_1$ and 
$F_2\colon X_1\to X_2$, we have $\phi_{F_2\circ F_1}=\phi_{F_2}\circ\phi_{F_1}$.

For (ii): the sections of $F_{2*}^{\rm ram}(F_{1*}^{\rm ram}(\Vx{0}\otimes\Delta_0))$
over $X_2$ are assembled by the two-step coset decomposition
$\piX{X_2}{p_0}/\piX{X_0}{p_0''}$, which factors as
$(\piX{X_2}{p_0}/\piX{X_1}{p_0'})\times(\piX{X_1}{p_0'}/\piX{X_0}{p_0''})$.
The ramification indices compose as 
$e_{p''}(F_2\circ F_1)=e_{F_1(p'')}(F_2)\cdot e_{p''}(F_1)$, 
and the normalisation factors $\sqrt{e}$ in \eqref{eqf2-ramified} satisfy
$\sqrt{e_1 e_2} = \sqrt{e_1}\cdot\sqrt{e_2}$. The spin-structure data for
$F_2\circ F_1$ is the tensor product of the data for $F_1$ and $F_2$,
compatibly with Assumption \ref{assspin}.
Hence $\phi_{F_2\circ F_1}=\phi_{F_2}\circ\phi_{F_1}$. 
\end{proof}
Theorem \ref{thmfunctor} establishes the functoriality of the Hardy-Kre\u{\i}n
space construction for the full category $\mathcal{RH}$, extending the result
of \cite{Zuevsky2009} from unramified morphisms.
When morphisms are restricted to
$S_2=\D$ one recovers the earlier result of \cite{AlpayVinnikov2000}. 
\section{Examples}
\label{secexamples}
\subsection{Degree-$2$ branched cover of the disk}
\label{exdegree2}
Let $S_2=\D$ be the open unit disk, and let $S_1$ be a copy of $\D$.
Define $F\colon S_1\to S_2$ by $F(z)=z^2$. This is a proper holomorphic
map of degree $2$ from the closed disk $\overline{S_1}$ to the closed disk
$\overline{S_2}$, with $F(\partial S_1)=\partial S_2$, since $|z|=1
\Rightarrow|z^2|=1$. The unique ramification point is $z=0\in S_1$,
an interior point of $S_1$, with ramification index $e_0=2$.

\textit{Doubled surfaces.}
The double $X_i$ of $\D$ is $\mathbb{P}^1$ (genus $0$): $X_i\cong\mathbb{P}^1$
with $S_i=\D$ and $S_i'=\mathbb{P}^1\setminus\overline{\D}$. Thus, $g_1=g_2=0$. 

\textit{Riemann-Hurwitz check.}
5%
We have 
$2g_1-2=n(2g_2-2)+\deg R_F$ which implies $-2=2\cdot(-2)+\deg R_F$, thus $\deg R_F=2$.  
Indeed, the ramification divisor on the double $X_1\cong\mathbb{P}^1$
consists of the ramification point $0\in S_1$ (index $2$) and its mirror
$0'\in S_1'$, index $2$, by equivariance of $F\colon X_1\to X_2$, giving
$\deg R_F=(2-1)+(2-1)=2$. 

\textit{Spin structure.}
Since $e_0=2$ is even, by Remark \ref{remspin-existence}(ii),
Assumption \ref{assspin} is satisfied with
$\mathcal{L}_{1/2}=\mathcal{O}_{X_1}(p_{\rm ram})$ where $p_{\rm ram}=0$
is the ramification point.

\textit{Boundary hypothesis.}
The ramification point $0\in S_1$ is an interior point, and $F(0)=0\in
S_2=\D$ is also interior. Hence $\partial S_2\cap B_F=\left\{\varnothing\right\}$, thus 
Assumption \ref{assboundary} holds.

\textit{The isometric isomorphism.}
For $m=1$, $\chi_1\equiv 1$, formula \eqref{eqf2-ramified} gives the
two-branch averaging map
\begin{equation}
\label{eqphi-example}
\phi_F\hat{f}^1(w)=\frac{1}{\sqrt{2}}\bigl[\hat{f}^1(\sqrt{w})+\hat{f}^1(-\sqrt{w})\bigr],
\quad w\in S_2=\D,
\end{equation}
where $\pm\sqrt{w}\in S_1=\D$ are the two preimages of $w$ choosing the
principal branch of the square root. The factor $1/\sqrt{2}=1/\sqrt{e_0}$
is the normalisation from \eqref{eqf2-ramified}. One verifies directly
that $F(\partial S_1)=\partial S_2$ and that \eqref{eqphi-example} is an
isometry of the respective Hardy-Kre\u{\i}n spaces, by the change-of-variables
argument of the proof of Theorem \ref{thmmain}(iii).

\textit{Ramification Bezoutian.}
The unique interior ramification point is $r_1=0\in S_1$ with $q_1=F(0)=0
\in S_2$ and $e_1=2$, thus $N=1$. By Definition \ref{defbezoutian-full}, 
 $\Bez_F^{\rm ram}=(e_1-1)\mathrm{ev}_0^{[*]}\Phi\mathrm{ev}_0  
=\mathrm{ev}_0^{[*]}\Phi\mathrm{ev}_0$,   
a well-defined rank-$1$ operator on $\mathcal{H}_2$, consistently with
the theory.
\subsection{Genus 2 Hyperelliptic Covering}
\label{exhyperelliptic}
Let $X_1$ be a compact Riemann surface of genus $g_1=2$. It is well known that $X_1$ is hyperelliptic, meaning there exists a degree $n=2$ analytic map 
$F\colon X_1\to\mathbb{P}^1$.
By the Riemann-Hurwitz formula ($2(2)-2=2(-2)+\deg R_F$), $F$ has exactly $6$ simple ramification points ($e_\nu=2$) in $X_1$.

\textit{Bordered surfaces.}
Let $S_2 \subset \mathbb{P}^1$ be a closed topological disk ($s_2=0, k_2=1$) containing exactly $3$ branch values $q_1, q_2, q_3$ strictly in its interior, with the other $3$ remaining in the exterior. The boundary $\partial S_2$ is chosen generically to avoid all branch values.
Define $S_1=F^{-1}(S_2)$, which is a bordered Riemann surface mapping to $S_2$.
By the Riemann-Hurwitz formula for bordered surfaces, the Euler characteristic is 
$\chi(S_1)=2\chi(S_2)-\sum_{\nu=1}^3(e_\nu-1)=2(1)-3=-1$.  
Assuming $\partial S_2$ is generic such that its lift $F^{-1}(\partial S_2)$ is a single connected loop ($k_1=1$), the Euler characteristic $\chi(S_1)=2-2s_1-k_1$ gives 
$2-2s_1-1=-1$ which implies $s_1=1$.
Thus, $S_1$ is a bordered Riemann surface of genus $1$ with $1$ boundary component. 
Its compact double has genus $g=2(1)+1-1=2$, 
which matches the topology of the full compact surface $X_1$ ($g_1=2$). The double of $S_2$ is $\mathbb{P}^1$ ($g_2=0$). This provides a consistent topological configuration for a  ramified covering $S_1 \to S_2$ with all required properties.

\textit{Boundary hypothesis.} By construction, $\partial S_2$ does not contain any branch value. Assumption \ref{assboundary} holds.

\textit{Spin structure.}
The ramification indices here are all even ($e_\nu=2$), thus Assumption \ref{assspin} 
requires $\mathcal{O}_{X_1}(\sum_1^3r_\nu)$ to admit a holomorphic square root bundle 
$\mathcal{L}_{1/2}$ on $X_1$. This depends on the exact placement of the branch points in 
$\mathrm{Pic}(X_1)[2]$ and must be confirmed individually. 

\textit{Conclusion.}
Given that Assumption \ref{assspin} holds for the specific arrangement, Theorem \ref{thmmain} gives a full isometric isomorphism between $H^{2,J_1(p)}(S_1,\VxS{1}\otimes\Delta_1)$
and $H^{2,J_2(p)}(S_2,\VxS{2}\otimes\Delta_2)$.
The Bezoutian 
$\Bez_F^{\rm ram}=\sum_{\nu=1}^3(e_\nu-1)\mathrm{ev}_{q_\nu}^{[*]}\Phi\mathrm{ev}_{q_\nu}$
is a well-defined finite-rank operator, providing the precise interior corrections via evaluating the proper Szeg\H{o} reproducing kernels to form $\Omega_F$ within the Livsic-Bezoutian identity.
\appendix
\section{Fundamental groups of $S$ and its double $X$}
\label{appfundamental-groups}
We collect the facts about fundamental groups needed in the main text. 
 These are taken from \cite{AlpayVinnikov2000, Natanzon1980} and stated
here for the reader's convenience.
Let $S$ be a finite bordered Riemann surface of genus $s$ with $k$ boundary
components $X_0$, $\ldots$, $X_{k-1}$. 
Choose base points $p_i\in X_i$ and paths
$C_i$ on $S$ from $p_0$ to $p_i$.
The fundamental group $\piX{S}{p_0}$ is
generated by
$A_0$, $A_1$, $\ldots$, $A_{k-1}$, $A_1'$, $B_1'$, $\ldots$, $A_s'$, $B_s'$,
with $A_0=X_0$, $A_j=C_j^{-1}X_jC_j$ for $j\geq 1$, and
$A_i',B_i'$ a standard symplectic basis, subject to the single relation
\[
\prod_{i=1}^s A_i'B_i'A_i'^{-1}B_i'^{-1}\cdot\prod_{j=0}^{k-1}A_j=1.
\]
The double $X$ has genus $g=2s+k-1$ and fundamental group 
$\piX{X}{p_0}$ generated by
$A_1$, $B_1$, $\ldots$, $A_{k-1}$, $B_{k-1}$, $A_1'$, $B_1'$, $\ldots$, $A_s'$, $B_s'$, $A_1''$, $B_1''$, $\ldots$, $A_s''$, $B_s''$  
where $B_j=(C_j^{\tau})^{-1}C_j$ and $A_i''=B_i'^{\tau}$, $B_i''=A_i'^{\tau}$.
The anti-holomorphic involution $\tau$ acts on generators by
\[
B_j^\tau=B_j^{-1}, \quad A_j^\tau=B_jA_jB_j^{-1},
\quad A_i''=B_i'^{\tau},\quad B_i''=A_i'^{\tau}.
\]
The standard generators for the boundary components of $S_2$ used in the
vessel conditions are the elements $B_{2,i}\in\piX{X_2}{p_0}$ corresponding
to the paths $B_{2,i}=(C_{2,i}^\tau)^{-1}C_{2,i}$ in the double $X_2$.
\section*{Acknowledgements}

The author is supported by the Institute of Mathematics, Academy of Sciences
of the Czech Republic (RVO 67985840). 
He is grateful to A. Gogatishvili, D. Levin, V. Vinnikov, M. Gekhtman, 
and H.V. L\^e for useful discussions. 

\medskip
\noindent\textbf{Data Availability.}
Data sharing is not applicable to this article as no datasets were generated
or analysed during the current study.

\medskip
\noindent\textbf{Declarations}

\medskip
\noindent\textbf{Conflict of interest.}
The author has no conflicts of interest to declare that are relevant to the
content of this article.



\begin{thebibliography}{10}
\bibitem{AlpayVinnikov2000}
D. Alpay, V. Vinnikov. 
 Indefinite Hardy spaces on finite bordered Riemann surfaces. 
J. Funct. Anal. \textbf{172} (2000), no. 1, 221-248.

\bibitem{AllingRosskopf1971}
N. L. Alling, N. Greenleaf. 
 Foundations of the theory of Klein surfaces. 
Lecture Notes in Mathematics, vol. 219, Springer, Berlin, 1971.

\bibitem{BallClancey1996}
J. A. Ball, K. Clancey. 
 Reproducing kernels for Hardy spaces on multiply connected domains. 
Integral Equations Operator Theory \textbf{25} (1996), 35-57.

\bibitem{BallVinnikov1996}
J. A. Ball, V. Vinnikov. 
 Zero-pole interpolation for meromorphic matrix functions on an algebraic
curve and transfer functions of 2D systems. 
Acta Appl. Math. \textbf{45} (1996), 239-316.

\bibitem{BallVinnikov2001}
J. A. Ball, V. Vinnikov. 
Hardy spaces on a finite bordered Riemann surface, multivariable
operator model theory and Fourier analysis along a unimodular curve. 
in: Systems, approximation, singular integral operators, and related topics,
Oper. Theory Adv. Appl., vol. 129, Birkh\"{a}user, Basel, 2001, pp. 37-56.

\bibitem{Bognar1974}
J. Bogn\'{a}r. Indefinite inner product spaces. 
Springer, Berlin, 1974.

\bibitem{Fay1973}
J. D. Fay. Theta functions on Riemann surfaces. 
Lecture Notes in Mathematics, vol. 352, Springer, New York, 1973.

\bibitem{FarKra1991}
H. M. Farkas, I. Kra. Riemann surfaces. 
Second edition, Springer, New York, 1991.

\bibitem{Kravitsky1996}
N. Kravitsky. Rational operator functions and Bezoutian operator vessels. 
Integral Equations Operator Theory \textbf{26} (1996), no. 1, 60-80.

\bibitem{Natanzon1980}
S. M. Natanzon. Moduli spaces of real curves. 
Trans. Moscow Math. Soc. \textbf{37} (1980), 233-272.

\bibitem{RSZ} A.V. Razumov, M.V. Saveliev, A.B. Zuevsky.
Nonabelian Toda equations associated with classical Lie groups.
arXiv:math-ph/9909008.

\bibitem{Vinnikov1992}
V. Vinnikov. Commuting nonselfadjoint operators and algebraic curves. 
in: Operator theory and complex analysis, Oper. Theory Adv. Appl., vol. 59,
Birkh\"{a}user, Basel, 1992, pp. 348-371.

\bibitem{Vinnikov1993}
V. Vinnikov. Self-adjoint determinantal representations of real plane curves. 
Math. Ann. \textbf{296} (1993), 453-479.

\bibitem{Vinnikov1998}
V. Vinnikov.  Commuting operators and function theory on a Riemann surface. 
in: Holomorphic spaces (S. Axler et al., eds.), Math. Sci. Res. Inst. Publ.
\textbf{33}, Cambridge Univ. Press, 1998.

\bibitem{volzub}
G.E. Volovik, M.A. Zubkov. 
Standard model as the topological material
New Journal of Physics 19 (1), 015009 (2017). 

\bibitem{Zuevsky2009}

A. Zuevsky. Hardy spaces on compact Riemann surfaces with boundary,
arXiv:0911.3908v1, 2009. 

\bibitem{Zuevsky2015}
A. Zuevsky. Construction of Hardy spaces on Riemann surfaces with boundaries. 
Internat. J. Theoret. Phys. 54 (2015), no. 11, 4086-4099. 

\bibitem{Zuevsky2022} 
A. Zuevsky. 
Characterization of codimension one foliations on complex curves by connections,
Rev. Math. Phys. \textbf{34} (2022) 2230002.

\bibitem{zucont}
A. Zuevsky. Continual Lie algebras and noncommutative counterparts of exactly solvable models. Special issue on recent advances in the theory of quantum integrable systems. J. Phys. A 37 (2004), no. 2, 537-547.

\bibitem{zusurfaces}
A. Zuevsky. Product-type classes for vertex algebra cohomology 
of foliations on complex curves. Comm. Math. Phys. 402 (2023), no. 2, 1453-1511.

\end{thebibliography}
\end{document}